\numberwithin{equation}{section}
\theoremstyle{definition}
\newtheorem*{conv*}{Conventions}
\newtheorem*{Related work}{Related work}
\theoremstyle{theorem}
\newtheorem{thm}[equation]{Theorem}
\newtheorem{lemma}[equation]{Lemma}
\newtheorem{thm-defi}[equation]{Theorem-Definition}
\newtheorem{prop}[equation]{Proposition}
\newtheorem{cor}[equation]{Corollary}
\newtheorem*{thm*}{Theorem}
\newtheorem*{lemma*}{Lemma}
\newtheorem*{cor*}{Corollary}
\newtheorem*{conj*}{Conjecture}
\newtheorem*{question*}{Question}
\theoremstyle{remark}
\newtheorem{remark}[equation]{Remark}
\newtheorem*{remark*}{Remark}
\newtheorem{example}[equation]{Example}
\newcommand{\nc}{\newcommand}
\nc{\dmo}{\DeclareMathOperator}
\nc{\Ivo}[1]{{\color{OliveGreen}#1}}
\nc{\Don}[1]{{\color{MidnightBlue}#1}}
\nc{\X}[1]{{\color{Violet}#1}}
\nc{\Xout}[1]{\X{\sout{#1}}}
\nc{\Iout}[1]{\Don{\sout{#1}}}
\nc{\Dout}[1]{\Ivo{\sout{#1}}}
\newcommand{\Spec}{\mathop{\mathrm{Spec}}}
\newcommand{\suppR}{\mathop{\mathrm{supp}_R}} 
\newcommand{\Supp}{\mathop{\mathrm{Supp}}} 
\newcommand{\SuppR}{\mathop{\mathrm{Supp}_R}} 
\newcommand{\Spc}{\mathop{\mathrm{Spc}}}
\newcommand{\supp}{\mathop{\mathrm{supp}}}
\newcommand{\cone}{\mathop{\mathrm{cone}}}
\newcommand{\Thick}{\mathop{\mathrm{Thick}}} 
\newcommand{\Loc}{\mathop{\mathrm{Loc}}} 
\newcommand{\unit}{\mathbf{1}} 
\newcommand{\Hom}{\mathrm{Hom}}
\newcommand{\End}{\mathrm{End}}
\newcommand{\Coloc}[1]{\mathit{\Gamma}_{#1}} 
\newcommand{\koszul}{/\!\!/} 
\newcommand{\residue}[1]{K(\mathfrak{#1})} 
\newcommand{\s}{\mathrm{s}}
\begin{document}


\title{Affine weakly regular tensor triangulated categories}
\author{Ivo Dell'Ambrogio}
\author{Donald Stanley}
\date{\today}

\address{Ivo Dell'Ambrogio, Laboratoire de Math\'ematiques Paul Painlev\'e, Universit\'e de Lille~1, Cit\'e Scientifique -- B\^at.~M2, 59665 Villeneuve-d'Ascq Cedex, France.}
\email{ivo.dellambrogio@math.univ-lille1.fr}
\urladdr{http://math.univ-lille1.fr/$\sim$dellambr}

\address{Donald Stanley, Department of Mathematics and Statistics,
University of Regina,
3737 Wascana Parkway,
Regina, Saskatchewan,
S4S 0A2  Canada.}
\email{stanley@math.uregina.ca}

\begin{abstract}
We prove that the Balmer spectrum of a tensor triangulated category is homeomorphic to the Zariski spectrum of its graded central ring, provided the triangulated category is generated by its tensor unit and the graded central ring is noetherian and regular in a weak sense. 
There follows a classification of all thick subcategories, and the result extends to the compactly generated setting to yield a classification of all localizing subcategories as well as the analog of the telescope conjecture. 
This generalizes results of Shamir for commutative ring spectra. 
\end{abstract}

\subjclass[2010]{18E30; 
55P42, 
55U35
} 
\keywords{Tensor triangulated category, thick subcategory, localizing subcategory, spectrum}

\thanks{}
\thanks{First-named author partially supported by the Labex CEMPI (ANR-11-LABX-0007-01)}
\thanks{}

\maketitle



\section{Introduction and results}
\label{sec:intro}

Let $\mathcal K$ be an essentially small tensor triangulated category, with symmetric exact tensor product $\otimes$ and tensor unit object~$\unit$.
Balmer~\cite{Balmer05a} defined a topological space, the \emph{spectrum} $\Spc \mathcal K$, that allows for the development of a geometric theory of~$\mathcal K$, similarly to how the Zariski spectrum captures the intrinsic geometry of commutative rings; see the survey~\cite{BalmerICM}. 
Among other uses, Balmer's spectrum encodes the classification of the thick tensor ideals of $\mathcal K$ in terms of certain subsets. 
It is therefore of interest to find an explicit description of the spectrum in the examples, but this is usually a difficult problem requiring some in-depth knowledge of each example at hand.

The goal of this note is to show that in some cases a concrete description of the spectrum can be obtained easily and completely formally. 
Let us denote by
\[
R:= \End_\mathcal K^*(\unit) = \bigoplus_{i\in \mathbb Z}\Hom_\mathcal K(\unit, \Sigma^i \unit)
\]
the graded endomorphism ring of the unit, where  $\Sigma \colon \mathcal K\to \mathcal K$ is the suspension functor. In the terminology of~\cite{Balmer10b}, this is the \emph{graded central ring} of~$\mathcal K$.
It is a graded commutative ring and therefore we can consider its spectrum of homogeneous prime ideals, $\Spec R$, equipped with the Zariski topology. As established in~\cite{Balmer10b}, there is always a canonical continuous map 
\[
\rho \colon \Spc \mathcal K \longrightarrow \Spec R
\]
comparing the two spectra. Under some mild hypotheses, e.g.\ when $R$ is noetherian,  $\rho$ can be shown to be surjective, but it is less frequently injective and, when it is, the proof of injectivity is typically much harder. 

Here is our main result:

\begin{thm} \label{thm:main_spectrum}
Assume that $\mathcal K$ satisfies the two following conditions:
\begin{enumerate}[\indent\rm(a)]
\item \label{it:affine} 
$\mathcal K$ is classically generated by $\unit$, i.e., as a thick subcategory:
$\Thick(\unit)= \mathcal K$.
\item \label{it:regular} $R$ is a (graded) noetherian ring concentrated in even degrees and, for every homogeneous prime ideal $\mathfrak p$ of~$R$, the maximal ideal of the local ring $R_{\mathfrak p}$ is generated by a (finite) regular sequence of homogeneous non-zero-divisors.
\end{enumerate}
Then the comparison map $\rho\colon \Spc \mathcal K \stackrel{\sim}{\to} \Spec R$ is a homeomorphism.
\end{thm}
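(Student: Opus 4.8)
The plan is to reduce the whole statement to the single formula
\[
\supp_{\mathcal K}(X)\ =\ \rho^{-1}\bigl(\SuppR\Hom_{\mathcal K}^*(\unit,X)\bigr)\qquad\text{for every }X\in\mathcal K ,
\]
where $\Hom_{\mathcal K}^*(\unit,X)=\bigoplus_{i\in\Z}\Hom_{\mathcal K}(\unit,\Sigma^iX)$ is viewed as a graded $R$-module. It is finitely generated over $R$: this holds for $X=\unit$ (where it is $R$) and the full subcategory of such $X$ is thick, hence by affineness equals $\Thick(\unit)=\mathcal K$; in particular the right-hand side above is the $\rho$-preimage of the closed set $\SuppR(\,\cdot\,)$. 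Once the formula is established the theorem follows formally. Indeed $\rho$ is continuous and, by \cite{Balmer10b} (as $R$ is noetherian), surjective; the formula forces $\rho$ to be injective, since $\rho(\mathcal P)=\rho(\mathcal Q)$ gives, for all $X$, the equivalence $X\notin\mathcal P\iff\rho(\mathcal P)\in\SuppR\Hom_{\mathcal K}^*(\unit,X)\iff X\notin\mathcal Q$; and the formula forces $\rho$ to be open, because the quasi-compact opens $\{\mathcal P\mid X\in\mathcal P\}$ form a basis of $\Spc\mathcal K$ and, $\rho$ being bijective, the formula identifies their images with the open sets $\Spec R\smallsetminus\SuppR\Hom_{\mathcal K}^*(\unit,X)$. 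A continuous open bijection is a homeomorphism.

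The inclusion $\subseteq$ in the formula needs only affineness and noetherianity. If $\mathfrak p=\rho(\mathcal P)$ is not in $\SuppR\Hom_{\mathcal K}^*(\unit,X)$, pick a homogeneous $s\in R\smallsetminus\mathfrak p$ annihilating the finitely generated module $\Hom_{\mathcal K}^*(\unit,X)$; a dévissage along a presentation of $X$ out of $\unit$ upgrades this to $s^m\cdot\End_{\mathcal K}^*(X)=0$ for some $m$, so $s^m\,\id_X=0$, so $X$ is a retract of $X\otimes(\unit\koszul s^m)$ and lies in $\Thick(\unit\koszul s^m)$. Hence $\supp_{\mathcal K}(X)\subseteq\supp_{\mathcal K}(\unit\koszul s^m)=\rho^{-1}(V(s))$, which omits $\mathcal P$ since $s\notin\mathfrak p$.

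The reverse inclusion, equivalently the implication ``$X\in\mathcal P\Rightarrow\Hom_{\mathcal K}^*(\unit,X)_{\rho(\mathcal P)}=0$'', is where the weak-regularity hypothesis is used, and it is the crux. Assume $X\in\mathcal P$, set $\mathfrak p=\rho(\mathcal P)$, and suppose for contradiction that $\Hom_{\mathcal K}^*(\unit,X)_{\mathfrak p}\ne0$. Localize: let $q\colon\mathcal K\to\mathcal K_{\mathfrak p}$ be the idempotent completion of the Verdier quotient of $\mathcal K$ by the thick $\otimes$-ideal generated by all $\unit\koszul s$ with $s\in R\smallsetminus\mathfrak p$ homogeneous; this ideal has support $\rho^{-1}\{\mathfrak q\mid\mathfrak q\not\subseteq\mathfrak p\}$, so by \cite{Balmer05a} one has $\Spc\mathcal K_{\mathfrak p}=\rho^{-1}\{\mathfrak q\mid\mathfrak q\subseteq\mathfrak p\}$ and $\mathcal P$ is pulled back by $q$ from a prime $\bar{\mathcal P}$ of $\mathcal K_{\mathfrak p}$. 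Since each $\unit\koszul s$ with $s\notin\mathfrak p$ dies in $\mathcal K_{\mathfrak p}$, the element $s$ acts invertibly there, so $\Hom_{\mathcal K_{\mathfrak p}}^*(\unit,-)$ takes values in graded $R_{\mathfrak p}$-modules; also $qX\ne0$, because the ideal we quotiented by is contained in $\{Z\mid\Hom_{\mathcal K}^*(\unit,Z)_{\mathfrak p}=0\}$ (each $\unit\koszul s$, $s\notin\mathfrak p$, lies there) while $X$ is not, and a non-zero object of $\Thick(\unit)=\mathcal K_{\mathfrak p}$ has non-zero $\Hom_{\mathcal K_{\mathfrak p}}^*(\unit,-)$ (by rigidity, otherwise $\End^*(qX)=\Hom^*(\unit,(qX)^\vee\otimes qX)$ would vanish). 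Now bring in the hypothesis: choose homogeneous $f_1,\dots,f_c\in R$ whose images in $R_{\mathfrak p}$ form a regular sequence generating the maximal ideal (after a harmless further localization, a regular sequence of non-zero-divisors), and set $K:=q\bigl((\unit\koszul f_1)\otimes\cdots\otimes(\unit\koszul f_c)\bigr)$. As each $f_i\in\mathfrak p$, each $\unit\koszul f_i$, hence $K$, lies outside $\bar{\mathcal P}$; and as each $f_i$ is a non-zero-divisor, $(\unit\koszul f_i)\otimes(\unit\koszul f_i)$ is a sum of two suspensions of $\unit\koszul f_i$, so $K\otimes K$ is a finite sum of suspensions of $K$ and $\End_{\mathcal K_{\mathfrak p}}^*(K)$ is a finite exterior algebra over the graded field $\kappa(\mathfrak p)=R_{\mathfrak p}/\mathfrak p R_{\mathfrak p}=\Hom_{\mathcal K_{\mathfrak p}}^*(\unit,K)$ (up to a suspension). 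Granting the key fact --- proved in the next step --- that the thick $\otimes$-ideal $\langle K\rangle$ is \emph{minimal} (admits no non-zero proper thick $\otimes$-subideal), we conclude: Nakayama's lemma, applied $c$ times through the defining triangles of the Koszul objects $\unit\koszul f_i$ (which sandwich $\Hom^*(\unit,Z\otimes\unit\koszul f_i)$ between $\Hom^*(\unit,Z)/f_i\Hom^*(\unit,Z)$ and the $f_i$-torsion of $\Hom^*(\unit,Z)$), yields $\Hom_{\mathcal K_{\mathfrak p}}^*(\unit,qX\otimes K)\ne0$, so $qX\otimes K\ne0$; then $\langle qX\otimes K\rangle=\langle K\rangle$ by minimality, so $K\in\langle qX\otimes K\rangle$; but $X\in\mathcal P$ gives $qX\in\bar{\mathcal P}$, hence $qX\otimes K\in\bar{\mathcal P}$ and $\langle qX\otimes K\rangle\subseteq\bar{\mathcal P}$, forcing $K\in\bar{\mathcal P}$ --- a contradiction.

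The remaining and genuinely hard step is the minimality of $\langle K\rangle$ inside $\mathcal K_{\mathfrak p}$, equivalently that $\supp_{\mathcal K_{\mathfrak p}}(K)$ is a single point (and thus that the fibre of $\rho$ over $\mathfrak p$ is a single point). Since $\mathcal K_{\mathfrak p}$ is affine, $\langle K\rangle=\Thick(K)$, and one must show that every non-zero object of $\Thick(K)$ generates $K$; this is where ``regular sequence of non-zero-divisors concentrated in even degrees'' is essential --- the even degrees make $\kappa(\mathfrak p)$ a genuine graded field, the non-zero-divisor property produces the splitting of $(\unit\koszul f_i)\otimes(\unit\koszul f_i)$ and hence the exterior-algebra description of $\End^*(K)$, and together these let one argue (via a Nakayama argument over the local Artinian ring $\End^*(K)$, using that perfect complexes over a graded field split) that $K$ behaves like the residue-field object of a regular local ring, so that no non-trivial thick $\otimes$-subideal can occur. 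This is the tensor-triangular analogue of the key step in Shamir's treatment of commutative ring spectra, and in a compactly generated enhancement it becomes the statement that the Benson--Iyengar--Krause strata $\Coloc{\mathfrak p}$ attached to the $R$-action are minimal, from which stratification --- hence the classification of localizing subcategories and the telescope conjecture alluded to in the introduction --- follows.
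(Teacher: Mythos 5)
Your overall framework is sound and in places more self-contained than the paper's: you reduce the theorem to the support formula $\supp_{\mathcal K}(X)=\rho^{-1}(\SuppR H^*X)$, deduce injectivity and openness directly from it (the paper instead invokes Hochster's criterion via Lemma~\ref{lemma:bij_homeo}), and you correctly reduce the hard inclusion to the minimality of $\Thick(K(\mathfrak p))$ inside $\mathcal K_{\mathfrak p}$, which is exactly the paper's Proposition~\ref{prop:minimal}. Your easy inclusion via the d\'evissage $s^m\,\id_X=0$ is fine, and your use of the Koszul--Nakayama step (Lemma~\ref{lemma:ttNAK} in the paper) to get $qX\otimes K\neq 0$ is also correct.

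The genuine gap is the step you yourself flag as ``the remaining and genuinely hard step'': you do not actually prove the minimality of $\Thick(K(\mathfrak p))$, you only sketch a plan, and the sketch as stated does not work. You propose ``a Nakayama argument over the local Artinian ring $\End^*(K)$, using that perfect complexes over a graded field split.'' But $\End^*_{\mathcal K_{\mathfrak p}}(K)$ is an exterior algebra over $k(\mathfrak p)$, not a graded field, and perfect complexes over an exterior algebra do \emph{not} split; nor is it clear how to realize an arbitrary object of $\Thick(K)$ as a perfect complex over $\End^*(K)$ in a way that controls the triangulated structure. The paper's actual route is different and is the real content of the theorem: one first shows (Lemma~\ref{lemma:zero_action}, using evenness plus the non-zero-divisor hypothesis to kill the obstruction $H^{\mathrm{odd}}(\unit_{\mathfrak p}\koszul f_i)=0$) that every $f\in\mathfrak pR_{\mathfrak p}$ acts as zero on $X\otimes K(\mathfrak p)$, so $H^*(X\otimes K(\mathfrak p))$ is a $k(\mathfrak p)$-vector space; then (Lemma~\ref{lemma:koszul_quotient}) that $H^*K(\mathfrak p)\cong k(\mathfrak p)$ is one-dimensional; and then, crucially, one \emph{lifts a $k(\mathfrak p)$-basis} of $H^*(X\otimes K(\mathfrak p))$ step by step through the $n$ Koszul triangles to construct an explicit map $\coprod_\alpha\Sigma^{n_\alpha}K(\mathfrak p)\to X\otimes K(\mathfrak p)$ that is an isomorphism on cohomology, hence an isomorphism (Proposition~\ref{prop:decomposition}). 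Minimality then falls out immediately. Without this decomposition argument---or a genuine replacement for it---your proof does not close, and the vague appeal to ``$K$ behaves like a residue-field object'' is precisely the assertion that needs proof.
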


As in the title, we may refer to a tensor triangulated category $\mathcal K$ satisfying hypotheses~\eqref{it:affine} and \eqref{it:regular} as being \emph{affine} and \emph{weakly regular}, respectively.
Note that $R$ being noetherian implies that $R^0=\End_\mathcal K(\unit)$ is a noetherian ring and that $R$ a finitely generated $R^0$-algebra, by~\cite{GotoYamagishi83}.

The next result is an easy consequence of the theorem. Here $\Supp_R H^*X$ denotes the (big) Zariski support of the cohomology graded $R$-module $H^*X:= \Hom^*_\mathcal K(\unit, X)$.

\begin{cor} \label{cor:main_thick}
If $\mathcal K$ and $R$ are as in the theorem, then there exists a canonical inclusion-preserving bijection 
\[
\xymatrix{
\big\{ \textrm{thick subcategories } \mathcal C \textrm{ of } \mathcal K  \big\} \ar@<3pt>[r]^-\sim & 
\ar@<2pt>[l] \big\{ \textrm{specialization closed subsets } V  \textrm{ of } \Spec R \big\}
}
\]
mapping a thick subcategory~$\mathcal C$ to $V=\bigcup_{X\in \mathcal C} \Supp_R H^*X$ and a specialization closed subset~$V$ to $\mathcal C=\{X\in \mathcal K \mid \Supp_R H^*X \subseteq V\}$. 
\end{cor}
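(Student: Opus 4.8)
The plan is to deduce the corollary from Theorem~\ref{thm:main_spectrum} by running it through Balmer's general classification of thick tensor ideals and then translating Balmer supports into cohomological supports.

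First I would record that hypothesis~\eqref{it:affine} collapses the distinction between thick subcategories and thick $\otimes$-ideals of $\mathcal K$: for a thick subcategory $\mathcal C$, the full subcategory $\{Y\in\mathcal K\mid\mathcal C\otimes Y\subseteq\mathcal C\}$ is thick (since $\otimes$ is exact in each variable and $\mathcal C$ is thick) and it contains $\unit$ because $\mathcal C\otimes\unit\cong\mathcal C$, hence equals $\mathcal K$; so $\mathcal C$ is automatically a thick tensor ideal. Balmer's classification theorem~\cite{Balmer05a} then applies verbatim and yields an inclusion-preserving bijection between the thick subcategories of $\mathcal K$ and the Thomason subsets of $\Spc\mathcal K$, sending $\mathcal C$ to $\supp\mathcal C=\bigcup_{X\in\mathcal C}\supp X$, with inverse $Y\mapsto\{X\in\mathcal K\mid\supp X\subseteq Y\}$.

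Next I would transport this bijection along the homeomorphism $\rho\colon\Spc\mathcal K\xrightarrow{\sim}\Spec R$ of Theorem~\ref{thm:main_spectrum}. Since $R$ is noetherian, $\Spec R$ is a noetherian topological space, in which every open subset is quasi-compact, so the Thomason subsets of $\Spec R$ coincide with its specialization-closed subsets. This produces an inclusion-preserving bijection between thick subcategories of $\mathcal K$ and specialization-closed subsets of $\Spec R$, and to recognise it as the one in the statement it remains only to prove that the two supports correspond, i.e.\ that
\[
\rho(\supp X)=\Supp_R H^*X \qquad\text{for every } X\in\mathcal K .
\]
Here $H^*X$ is a finitely generated graded $R$-module for every $X\in\mathcal K$ (those $X$ form a thick subcategory containing $\unit$, using that $R$ is noetherian), so $\Supp_R H^*X=V(\mathrm{ann}_R H^*X)$ is closed. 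The inclusion ``$\subseteq$'' uses only hypothesis~\eqref{it:affine}: if $\mathfrak p=\rho(\mathfrak P)\notin\Supp_R H^*X$, then some homogeneous $s\notin\rho(\mathfrak P)$ annihilates $H^*X$; because $\mathcal K=\Thick(\unit)$ this forces $s\otimes\id_X$ to vanish, so the triangle on it splits and $\cone(s)\otimes X\cong\Sigma^{|s|}X\oplus\Sigma X$; since $\cone(s)\in\mathfrak P$ this gives $X\in\mathfrak P$, i.e.\ $\mathfrak p\notin\rho(\supp X)$. For ``$\supseteq$'', given $\mathfrak p\in\Supp_R H^*X$ with $\mathfrak P=\rho^{-1}(\mathfrak p)$, I would use the residue object $\residue{p}$ built from the regular sequence of hypothesis~\eqref{it:regular} in the proof of Theorem~\ref{thm:main_spectrum}: its Balmer support is the single point $\{\mathfrak P\}$, and the Koszul-homology computation already used there shows $X\otimes\residue{p}\neq 0$ whenever $(H^*X)_{\mathfrak p}\neq 0$ (bottom Koszul homology is $(H^*X)_{\mathfrak p}$ modulo the maximal ideal of the regular local ring $R_{\mathfrak p}$, which is nonzero by Nakayama); hence $\mathfrak P\in\supp(X\otimes\residue{p})\subseteq\supp X$, i.e.\ $\mathfrak p\in\rho(\supp X)$.

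The one genuinely substantive ingredient is the support identity, and within it only the inclusion $\Supp_R H^*X\subseteq\rho(\supp X)$ uses weak regularity; but this is exactly the local detection phenomenon that already underlies the injectivity of $\rho$ in Theorem~\ref{thm:main_spectrum}, so in the write-up it should amount to a citation of that argument rather than a new one. Once the support identity is in hand, the two assignments $\mathcal C\mapsto\bigcup_{X\in\mathcal C}\Supp_R H^*X$ and $V\mapsto\{X\in\mathcal K\mid\Supp_R H^*X\subseteq V\}$ of the corollary are obtained from the transported Balmer bijection by direct substitution of $\Supp_R H^*X$ for $\rho(\supp X)$, so they are mutually inverse and inclusion-preserving.
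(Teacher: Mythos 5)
Your overall strategy---pass Theorem~\ref{thm:main_spectrum} through Balmer's classification of (radical) thick tensor ideals, identify Thomason subsets with specialization-closed ones using that $\Spec R$ is noetherian, and then match the Balmer support $\supp X$ with the cohomological support $\Supp_R H^*X$---is the same as the paper's. What differs is how you establish the support identity, which is the paper's Lemma~\ref{lemma:supps_K}. There, the identity is obtained by a short chain of equivalences: the localization formula $(H^*X)_\mathfrak p \cong H^*(X_\mathfrak p)$, the fact that $\mathcal K_\mathfrak p$ is generated by $\unit_\mathfrak p$ (so $X_\mathfrak p=0$ iff $H^*X_\mathfrak p=0$), and the crucial output of the proof of Theorem~\ref{thm:main_spectrum} that $\mathcal P_\mathfrak p = \{0\}$, combined with the pullback square~\eqref{eq:pullback_spectra}. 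Your version proves the two inclusions by separate arguments.

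Two small points on your version of the support identity. First, in the inclusion ``$\subseteq$'' the step ``because $\mathcal K = \Thick(\unit)$ this forces $s\otimes\id_X$ to vanish'' is not quite right as stated: if $s$ annihilates $\Hom^*(\unit,X)$, the thick-subcategory propagation only gives that some power $s^N$ annihilates $\Hom^*(Y,X)$ for each $Y\in\Thick(\unit)$; taking $Y=X$ yields $s^N\cdot\id_X=0$ for some $N\ge 1$, which is what makes $\cone(s^N)\otimes X$ split and $X$ land in $\mathfrak P$. Since $s^N\notin\mathfrak p$ still, the conclusion survives, but the power must be taken. Second, in the inclusion ``$\supseteq$'' the detour through $X\otimes\residue{p}$ and the Koszul computation is unnecessary: once you know $(H^*X)_\mathfrak p = H^*(X_\mathfrak p)$ and that $\mathcal K_\mathfrak p$ is generated by its unit, $\mathfrak p\in\Supp_R H^*X$ directly gives $X_\mathfrak p\neq 0$, hence $X\notin\mathcal P=\rho^{-1}(\mathfrak p)$ by the pullback square and the identification $\mathcal P_\mathfrak p = \{0\}$. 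This is exactly the paper's clean route and it sidesteps having to keep track of supports across $\mathcal K$ and $\mathcal K_\mathfrak p$. You should also record explicitly that, since every object of $\mathcal K$ is dualizable (they form a thick subcategory containing $\unit$), every thick tensor ideal is radical, so that Balmer's classification theorem applies in its stated form.
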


In many natural examples, $\mathcal K$ occurs as the subcategory $\mathcal T^c$ of compact objects in a compactly generated tensor triangulated category~$\mathcal T$. By the latter we mean a compactly generated triangulated category $\mathcal T$ equipped with a symmetric monoidal structure $\otimes$ which preserves coproducts and exact triangles in both variables, and such that the compact objects form a tensor subcategory~$\mathcal T^c$ 
(that is: $\unit $ is compact and the tensor product of two compact objects is again compact).

In this case, the same hypotheses allow us to classify also the localizing subcategories of~$\mathcal T$, thanks to the stratification theory of compactly generated categories due to Benson, Iyengar and Krause \cite{BensonIyengarKrause11}. 
The \emph{support} $\suppR X \subseteq \Spec R$ of an object $X\in \mathcal T$ is defined in \cite{BensonIyengarKrause08}, and can be described as the set 
\[
\suppR X = \{ \mathfrak p \in \Spec R \mid X \otimes \residue{p} \neq 0 \}
\]
where the \emph{residue field object} $\residue{p}$ of a prime ideal $\mathfrak p$ is an object whose cohomology is the graded residue field of $R$ at~$\mathfrak p$; see~\S\ref{sec:thick}.

\begin{thm} \label{thm:main_localizing}
Let $\mathcal T$ be a compactly generated tensor triangulated category with compact objects $\mathcal K:=\mathcal T^c$ and graded central ring~$R$ satisfying conditions \eqref{it:affine} and~\eqref{it:regular}.
Then we have the following canonical inclusion-preserving bijection:
\[
\xymatrix{
\big\{ \textrm{localizing subcategories } \mathcal L \subseteq \mathcal T  \big\} \ar@<3pt>[r]^-\sim & 
\ar@<2pt>[l] \big\{ \textrm{subsets } S\subseteq \Spec R \big\} \,.
}
\]
The correspondence sends a localizing subcategory~$\mathcal L$ to $S=\bigcup_{X\in \mathcal L} \suppR X$, and an arbitrary subset~$S$ to $\mathcal L=\{X\in \mathcal T \mid \suppR X \subseteq S\}$. 
Moreover, the bijection restricts to localizing subcategories $\mathcal L=\Loc(\mathcal L\cap \mathcal K)$ which are generated by compact objects on the left and to specialization closed subsets $S=\bigcup_{\mathfrak p\in S} V(\mathfrak p)$ on the right.
\end{thm}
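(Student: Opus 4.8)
The plan is to derive the classification from the stratification theory of Benson--Iyengar--Krause \cite{BensonIyengarKrause11}. The graded central ring $R=\End_{\mathcal K}^*(\unit)$ acts canonically on the big category $\mathcal T$ --- a homogeneous element $r$ of degree $n$ gives the natural transformation $\id_{\mathcal T}\to\Sigma^n$ obtained by tensoring with $r\colon\unit\to\Sigma^n\unit$ --- and since by~\eqref{it:regular} $R$ is noetherian and concentrated in even degrees, we are exactly in the setting of op.\ cit. Moreover $\mathcal T=\Loc(\unit)$ by~\eqref{it:affine} (as $\mathcal T^c=\Thick(\unit)$ generates), and $\Spec R$ is a noetherian space, so the local--global principle of \cite{BensonIyengarKrause11} holds. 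By the general classification theorems of op.\ cit.\ it therefore suffices to prove that $\mathcal T$ is \emph{stratified} by $R$, i.e.\ that for each homogeneous prime $\mathfrak p$ the localizing subcategory $\Coloc{\mathfrak p}\mathcal T$ is either zero or minimal. Granting this, op.\ cit.\ yields the asserted inclusion-preserving bijection between localizing subcategories of $\mathcal T$ and subsets of $\suppR\mathcal T$; and $\suppR\mathcal T=\Spec R$, since $\suppR\unit=\{\mathfrak p\mid\residue p\neq 0\}=\Spec R$, each residue field object having nonzero cohomology. The same results show that a localizing subcategory is generated by compact objects precisely when its support is specialization closed; for such a support $S=\bigcup_{\mathfrak p\in S}V(\mathfrak p)$ the corresponding localizing subcategory $\mathcal L$ satisfies $\mathcal L\cap\mathcal K=\{X\in\mathcal K\mid\suppR X\subseteq S\}$, which by Corollary~\ref{cor:main_thick} generates $\mathcal L$; this gives the ``moreover'' part, and the telescope property is part of the same package.

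It remains to prove minimality of $\Coloc{\mathfrak p}\mathcal T$. Using~\eqref{it:regular}, choose homogeneous non-zero-divisors $x_1,\dots,x_c\in\mathfrak p$ whose images form a regular sequence generating the maximal ideal of $R_{\mathfrak p}$, and form the compact Koszul object $\unit\koszul(x_1,\dots,x_c)$ obtained from $\unit$ by iterated cones of the maps $x_i$. After localizing at $\mathfrak p$ (in the sense of \S\ref{sec:thick}) this becomes the residue field object $\residue p$, whose cohomology is the graded residue field $k(\mathfrak p)$ precisely because $x_1,\dots,x_c$ is a regular sequence. A standard ``Koszul object on powers'' computation then identifies $\Coloc{\mathfrak p}\mathcal T$ with $\Loc(\residue p)$: indeed $\Coloc{\mathfrak p}\unit$, built as a homotopy colimit of Koszul objects on the powers $x_i^n$, lies in $\Loc(\residue p)$, while conversely $\residue p\in\Coloc{\mathfrak p}\mathcal T=\Loc(\Coloc{\mathfrak p}\unit)$.

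The crux --- and the one place where weak regularity is genuinely used --- is that every nonzero object $M$ of $\Loc(\residue p)$ generates it. Iterating the defining triangles of the Koszul object shows that the graded endomorphism ring $E:=\End^*_{\mathcal T}(\residue p)$ is a finite-dimensional graded-local $k(\mathfrak p)$-algebra with residue field $k(\mathfrak p)$, obtained from $k(\mathfrak p)$ by adjoining $c$ exterior-type generators dual to $x_1,\dots,x_c$; for a general noetherian $R$ one would obtain only an opaque graded-local algebra, whereas here it is the endomorphism ring of a small complete intersection. One then runs the argument of Benson--Iyengar--Krause for complete intersections, the analogue of Shamir's argument for commutative ring spectra: the functor $\Hom^*_{\mathcal T}(\residue p,-)$ detects nonzero objects of $\Loc(\residue p)$, and the explicit shape of $E$ allows one to build a filtration of $M$ by shifts of $\residue p$ --- for instance by killing the exterior generators one at a time and reducing to the derived category of the field $k(\mathfrak p)$, which is manifestly minimal --- thereby forcing $\residue p\in\Loc(M)$. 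Promoting nonvanishing of $\Hom^*_{\mathcal T}(\residue p,-)$ to genuine generation is the main obstacle of the proof.
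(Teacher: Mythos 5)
Your overall framework is the right one and matches the paper's: invoke Benson--Iyengar--Krause stratification, note that the local--global principle is automatic for the canonical central action on a tensor category (and that $\supp_R\mathcal{T}=\Spec R$), and reduce everything to the minimality of $\Coloc{\mathfrak p}\mathcal T$, identified with $\Loc(\residue p)$ via \cite[Lemma~3.8(2)]{BensonIyengarKrause11}. Where your argument genuinely differs from the paper --- and where it has a gap --- is the minimality step itself.

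You propose to analyze the graded endomorphism algebra $E=\End^*_{\mathcal T}(\residue p)$, argue that it is a finite-dimensional graded-local $k(\mathfrak p)$-algebra with exterior generators, and then build a filtration of an arbitrary nonzero $M\in\Loc(\residue p)$ by shifts of $\residue p$, ``killing the exterior generators one at a time.'' You explicitly flag that promoting nonvanishing of $\Hom^*_{\mathcal T}(\residue p,-)$ to actual generation ``is the main obstacle,'' and indeed this is exactly what is missing: the sketch does not say how the filtration is constructed or why it terminates, and the structure of $E$ alone does not hand you such a filtration. Moreover there is a secondary issue in the detection claim: $\residue p$ is \emph{not} compact in $\mathcal T$ (since $\unit_\mathfrak p$ is a Bousfield localization of $\unit$), so $\Hom^*_{\mathcal T}(\residue p,-)$ does not obviously commute with coproducts and is not obviously conservative on $\Loc(\residue p)$.

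The paper sidesteps both difficulties. For detection it uses the exact, coproduct-preserving functor $-\otimes\residue p$ instead of $\Hom^*(\residue p,-)$: a nonzero $X\in\Loc(\residue p)$ satisfies $X\otimes\residue q=0$ for $\mathfrak q\neq\mathfrak p$ by the zero-action Lemma~\ref{lemma:zero_action_T}, so Proposition~\ref{prop:detection} (which chooses a minimal prime in $\Supp_R H^*X$ via Lemma~\ref{lemma:min_supp} and climbs the Koszul triangles) forces $X\otimes\residue p\neq0$. For minimality the key mechanism is Proposition~\ref{prop:decomposition_T}: $H^*(X\otimes\residue p)$ is a graded $k(\mathfrak p)$-vector space (Lemma~\ref{lemma:zero_action_T}), a chosen basis gives maps $\Sigma^{n_\alpha}\unit_\mathfrak p\to X\otimes\residue p$, and the zero-action lemma lets one recursively extend each of these over the defining Koszul triangles to a map $\Sigma^{n_\alpha}\residue p\to X\otimes\residue p$ hitting the same cohomology class; since $H^*\residue p\cong k(\mathfrak p)$ is one-dimensional (Lemma~\ref{lemma:koszul_quotient}), the assembled map $\coprod_\alpha\Sigma^{n_\alpha}\residue p\to X\otimes\residue p$ is a cohomology isomorphism, hence an isomorphism. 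From this decomposition minimality is immediate: any nonzero $X\in\Loc(\residue p)$ forces $X\otimes\residue p\neq0$, which is a coproduct of shifts of $\residue p$ lying in $\Loc(X)$, so $\residue p\in\Loc(X)$. This decomposition argument --- not the analysis of the endomorphism algebra --- is what weak regularity buys you, and it is absent from your proposal. To repair your write-up you would either need to supply the actual filtration argument you allude to, or substitute the paper's decomposition-of-$X\otimes\residue p$ mechanism.
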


Note that here the affine condition \eqref{it:affine} is equivalent to requiring that $\mathcal T$ is generated by $\unit$ as a localizing subcategory. 
As $\Supp_R H^* X = \suppR X$ for all compact objects $X\in \mathcal K$, one sees easily that in the compactly generated case Theorem~\ref{thm:main_spectrum} and Corollary~\ref{cor:main_thick} are also a consequence of Theorem~\ref{thm:main_localizing}. 
 
The next corollary is another by-product of stratification. Recall that a localizing subcategory $\mathcal L\subseteq \mathcal T$ is \emph{smashing} if the inclusion functor $\mathcal L\hookrightarrow \mathcal T$ admits a coproduct-preserving right adjoint.

\begin{cor}[The telescope conjecture in the affine weakly regular case] \label{cor:telescope}
In the situation of Theorem~\ref{thm:main_localizing}, every smashing subcategory of $\mathcal T$ is generated by a set of compact objects of~$\mathcal T$.
\end{cor}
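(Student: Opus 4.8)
The plan is to reduce the statement, via the classification in Theorem~\ref{thm:main_localizing}, to a purely topological assertion about supports, and then to feed in the noetherianity of $R$ in the same way Neeman does for $D(R)$. First I would invoke the ``moreover'' clause of Theorem~\ref{thm:main_localizing}: a localizing subcategory has the form $\Loc(\mathcal L\cap\mathcal K)$ --- equivalently, is generated by a set of compact objects --- exactly when the subset $S=\bigcup_{X\in\mathcal L}\suppR X\subseteq\Spec R$ attached to it is specialization closed. So everything reduces to showing that the subset attached to a \emph{smashing} $\mathcal L$ is specialization closed.

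Next I would translate the smashing hypothesis into idempotents. As $\mathcal L$ is smashing, the Bousfield localization functor $L$ and the acyclization functor $\Gamma$ (sitting in a natural triangle $\Gamma X\to X\to LX$ with $\Gamma X\in\mathcal L$, $LX\in\mathcal L^\perp$) both preserve coproducts; since $\mathcal T=\Loc(\unit)$ by~\eqref{it:affine} and every localizing subcategory of an affine tensor triangulated category is automatically a $\otimes$-ideal, this forces $\Gamma\cong\Gamma\unit\otimes(-)$ and $L\cong L\unit\otimes(-)$, with $\Gamma\unit$ and $L\unit$ complementary $\otimes$-idempotents ($\Gamma\unit\otimes L\unit=0$). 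In particular $\mathcal L^\perp=\Loc(L\unit)$ is again localizing, hence occurs in Theorem~\ref{thm:main_localizing}, attached to the subset $\suppR(L\unit)$. Using that $\mathcal L\cap\mathcal L^\perp=0$, that each residue field object $\residue p$ is an extension of $L\residue p$ by $\Gamma\residue p$, and the minimality of the localizing subcategories $\Loc(\residue p)$ underlying the classification, one checks that $S$ and $\suppR(L\unit)$ partition $\Spec R$. Thus $S=\Spec R\setminus\suppR(L\unit)$, and it remains to prove that $\suppR(L\unit)$ is stable under generization.

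This last point is the crux, and it is here that noetherianity must be used essentially. One genuine simplification is available: since $\mathcal L^\perp$ is a localizing $\otimes$-ideal it is closed under every smashing localization $\Local{\mathfrak q}$, so $\Local{\mathfrak q}(L\unit)\cong L\unit\otimes\Local{\mathfrak q}\unit$ is again a $\otimes$-idempotent, now living in the affine stratified category $\Local{\mathfrak q}\mathcal T$ whose graded central ring is the regular local ring $R_{\mathfrak q}$; by the tensor product formula for supports (valid under the hypotheses of Theorem~\ref{thm:main_localizing}) its support equals $\suppR(L\unit)\cap\{\mathfrak r\mid\mathfrak r\subseteq\mathfrak q\}$. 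So, given $\mathfrak q\in\suppR(L\unit)$ and a generization $\mathfrak p\subseteq\mathfrak q$, it suffices to show that a $\otimes$-idempotent of $\Local{\mathfrak q}\mathcal T$ whose support contains the closed point has full support; equivalently, that a nonzero smashing subcategory of $\Local{\mathfrak q}\mathcal T$ contains the residue field object $\residue q=\Local{\mathfrak q}\unit\koszul(x_1,\dots,x_n)$ of the closed point, where $x_1,\dots,x_n$ is a regular system of parameters for $R_{\mathfrak q}$ afforded by~\eqref{it:regular}. The main obstacle is that this reduction --- and any similar one --- only transports the statement to a smaller localization of $\mathcal T$; to actually close it one needs the non-formal input, which is the technical heart of every proof of the telescope conjecture in a noetherian setting, that over a noetherian ring each object of a smashing subcategory is a filtered homotopy colimit of compact objects lying in that subcategory. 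Since Theorem~\ref{thm:main_localizing} already exhibits $\mathcal T$ as stratified by the noetherian ring $R$, the most economical way to conclude is to quote the Benson--Iyengar--Krause form of this implication for stratified triangulated categories.
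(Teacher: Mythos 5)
The paper's own proof of this corollary is a single sentence: having already verified, in the course of proving Theorem~\ref{thm:main_localizing}, that the canonical $R$-action stratifies $\mathcal T$ with $R$ noetherian, one simply quotes the general Benson--Iyengar--Krause result (\cite[\S6.2]{BensonIyengarKrause11}) that stratification implies the telescope conjecture. Your proposal arrives at exactly this in its final sentence, so the conclusion is sound; but the route you take to get there is both circuitous and, by your own admission, incomplete as a direct argument. The reduction to ``$S$ is specialization closed iff $\suppR(L\unit)$ is generization closed'' is fine, as is the passage through the complementary $\otimes$-idempotents $\Gamma\unit$ and $L\unit$ and the partition of $\Spec R$ into $\suppR(\Gamma\unit)$ and $\suppR(L\unit)$ (this last point does rely on the tensor formula for supports, which you should at least flag). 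But the localization step does not close anything, as you yourself note, and so everything before the final sentence ends up being scaffolding that the BIK citation renders unnecessary. Two smaller remarks: the opening sentence announces you will ``feed in noetherianity the same way Neeman does for $D(R)$'', yet Neeman's argument and the BIK stratification machine you actually invoke are quite different in character, so the stated plan and the executed plan do not match; and the BIK result in \S6.2 is phrased for $R$-linear triangulated categories without tensor structure, so the $\otimes$-idempotent framing, while a legitimate alternative route one could in principle complete, is not the mechanism inside the proof you are quoting. The economical (and the paper's) version of your argument is just your last sentence.
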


A few special cases of our formal results had already been observed, such as when $R$ is even periodic and of global dimension at most one; see \cite{DellAmbrogioTabuada12}.
We now consider some more concrete examples.

\begin{center} $***$ \end{center}

\begin{example} \label{ex:cdga}
Let $A$ be a commutative dg~algebra and $D(A)$ its derived category of dg~modules.
Then $D(A)$ is an affine compactly generated tensor triangulated category with respect to the standard tensor product $\otimes=\otimes^\mathrm{L}_A$, and $R= H^{*}A$ is the cohomology algebra of~$A$; thus if the latter satisfies~\eqref{it:regular} all our results apply to~$D(A)$. Actually, in this example we can improve our results a little by eliminating the hypothesis that $R$ is even and that the elements of the regular sequences are non-zero-divisors:
\end{example}

\begin{thm} \label{thm:cdga}
Let $A$ be a commutative dg~algebra such that its graded cohomology ring $R=H^*A$ is noetherian and such that every local prime $\mathfrak p R_{\mathfrak p}$ is generated by a finite regular sequence. Then all the conclusions of Theorems~\ref{thm:main_spectrum} and~\ref{thm:main_localizing} and of Corollaries~\ref{cor:main_thick} and~\ref{cor:telescope} hold for $\mathcal T=D(A)$ and $\mathcal K=D(A)^c$.
\end{thm}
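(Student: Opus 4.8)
The plan is to revisit the proofs of Theorems~\ref{thm:main_spectrum} and~\ref{thm:main_localizing} and to keep track of exactly where the two extra clauses of hypothesis~\eqref{it:regular} — that $R$ be concentrated in even degrees, and that the chosen regular sequences be made of non-zero-divisors — are actually used. They enter only through the construction, for each homogeneous prime $\mathfrak p$ of $R$, of a Koszul-type object over the tensor unit, together with the computation of its cohomology and of its graded endomorphism ring. For $\mathcal T = D(A)$ the whole construction can be carried out directly with the dg structure at hand, and this is what makes the stronger statement possible.

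Most of the argument does not see hypothesis~\eqref{it:regular} and transfers verbatim. Since $R$ is noetherian, the comparison map $\rho$ is surjective and the local--global principle of Benson--Iyengar--Krause applies to $\mathcal T$; so it suffices to prove that $\rho$ is injective and that for every $\mathfrak p\in\Spec R$ the localizing subcategory $\Coloc{\mathfrak p}\Local{\mathfrak p}\mathcal T$ of $\mathfrak p$-local $\mathfrak p$-torsion objects is minimal, equivalently generated by a single residue field object $\residue p$ with $H^*\residue p$ the graded residue field $k(\mathfrak p)$. Both statements may be checked after localizing at $\mathfrak p$ (replacing $D(A)$ by $D(A_{\mathfrak p})$), so we may assume $R$ local with maximal ideal $\mathfrak m=\mathfrak p R_{\mathfrak p}$ generated by a finite regular sequence; fix homogeneous cocycle representatives $x_1,\ldots,x_n\in A$, now of arbitrary (not necessarily even) degree.

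Form the compact object $B:=A\koszul(x_1,\ldots,x_n)\in\mathcal K$ by successively coning off the action of the classes $x_i$; each step is a cone of a map between compact objects, and since $A=\unit$ we get $B\in\Thick(\unit)=\mathcal K$ by~\eqref{it:affine}. Its cohomology is computed one step at a time from the defining triangles: at the $i$-th stage the relevant self-map is multiplication by the image of $x_i$, which by regularity of the sequence is a non-zero-divisor on the cohomology $R/(x_1,\ldots,x_{i-1})$ obtained so far, so the long exact sequence collapses and $H^*B=R/(x_1,\ldots,x_n)=k(\mathfrak p)$ — an argument that is insensitive to the parities of the $x_i$ and uses no non-zero-divisor property beyond the one built into the notion of a regular sequence. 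Thus $B$ serves as $\residue p$, and a base-change computation (viewing $B\simeq A\otimes^{\mathrm L}_{\Z[t_1,\ldots,t_n]}\Z$ via $t_i\mapsto x_i$) identifies $\End^*_{D(A)}(\residue p)$ with the corresponding graded Tor algebra over $k(\mathfrak p)$: an exterior algebra on the even-degree $x_i$ and a polynomial (or divided-power) algebra on the odd-degree ones. With $\residue p$ and this endomorphism computation in place, minimality of $\Coloc{\mathfrak p}\Local{\mathfrak p}\mathcal T$ and injectivity of $\rho$ follow as in the proofs of Theorems~\ref{thm:main_spectrum} and~\ref{thm:main_localizing}, and the classifications of thick, localizing and smashing subcategories then follow from stratification exactly as before. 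The point demanding the most care is precisely the bookkeeping around odd-degree generators: one cannot kill an odd-degree cohomology class of a commutative dg algebra by adjoining a single exterior variable — in characteristic $2$ this would produce an infinite, non-compact polynomial complex — so one must work with the (still two-term, still compact) Koszul complex on each element, and one must then recompute $\End^*(\residue p)$ and re-examine the minimality argument in the mixed-parity generality, where this algebra need no longer be exterior but is still graded-local with residue field $k(\mathfrak p)$ and finitely generated over it. Everything else is inherited unchanged from Theorems~\ref{thm:main_spectrum}--\ref{thm:main_localizing} and Corollaries~\ref{cor:main_thick}--\ref{cor:telescope}.
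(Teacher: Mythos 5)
Your overall plan --- go back to where hypothesis~\eqref{it:regular} is actually used and replace it by something dg-specific --- is the right one, but you have not isolated the correct pressure point and the substitute you offer for it is both unnecessary and incompletely justified. As Remark~\ref{rem:hyp} in the paper explains, the evenness of $R$ and the non-zero-divisor clause are used in \emph{exactly one} place, namely Lemma~\ref{lemma:zero_action}, whose crucial content is that $f_i$ acts as the zero morphism on $\unit_\mathfrak p\koszul f_i$ (and then, by tensoring and $R$-bilinearity, the whole ideal $\mathfrak p R_\mathfrak p$ acts as zero on $\residue p$ and on every $X\otimes\residue p$). Lemma~\ref{lemma:koszul_quotient}, which computes $H^*\residue p\simeq k(\mathfrak p)$, already uses only that $f_1,\dots,f_n$ is a regular sequence, so it survives without change. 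The paper's proof of Theorem~\ref{thm:cdga} therefore consists of one elementary dg lemma: writing down an explicit chain homotopy $H$ on the cone $C(f)$ showing $f\cdot C(f)\sim 0$ for \emph{any} $f\in H^*A$ (Lemma~\ref{lemma:cdga}); this supplies the conclusion of Lemma~\ref{lemma:zero_action} directly, and everything downstream (Corollary~\ref{cor:vector_space}, Propositions~\ref{prop:decomposition} and~\ref{prop:minimal}, and their $\mathcal T$-analogues in~\S\ref{sec:localizing}) runs unchanged.

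Your proposal instead aims to compute $\End^*_{D(A)}(\residue p)$ via a base-change $\residue p\simeq A\otimes^{\mathrm L}_{\Z[t_1,\dots,t_n]}\Z$ and identify it with a Tor algebra, and then ``re-examine the minimality argument.'' Several things go wrong or are left hanging here. First, even if the base-change identification of the \emph{object} $\residue p$ holds (it does, using the length-$2^n$ free resolution of $\Z$ over $\Z[t_1,\dots,t_n]$), the iterated cone does \emph{not} carry a commutative dga structure over $A$ when some $|x_i|$ is odd, so the Tor-algebra description of $\End^*_{D(A)}(\residue p)$ is not the formal consequence you suggest; a genuine change-of-rings argument would be needed, and you do not give it. (Your parenthetical about characteristic $2$ also has it backwards: it is odd-degree $x_i$, in \emph{any} characteristic, that make the free commutative model $A\langle y_i\rangle$ infinite, since $|y_i|$ is then even and $y_i^2\neq 0$.) Second, even granting the endomorphism computation, what the minimality argument actually needs is that the composite $R_\mathfrak p\to\End^*_{D(A)}(\residue p)$ kills $\mathfrak p R_\mathfrak p$ --- equivalently that $\mathfrak p R_\mathfrak p$ acts as zero on $\residue p$ as morphisms, not merely on cohomology --- and you never verify this from your description. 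Third, and most importantly, you explicitly punt on the key step (``one must then recompute $\End^*(\residue p)$ and re-examine the minimality argument''), precisely the part the paper makes trivial with its homotopy. So: right idea, wrong (and gappy) substitute; the paper's route via the one-line null homotopy is both simpler and complete.
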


We can apply this for instance to a graded polynomial algebra with any choice of grading for the variables, seen as a strictly commutative formal dg algebra.

\begin{example}
Let $A$ be a commutative $S$-algebra (a.k.a.\ commutative highly structured ring spectrum), and let $D(A)$ be its derived category. 
(This covers Example~\ref{ex:cdga}, as commutative dga's can be seen as commutative $S$-algebras.)
Then $D(A)$ is an affine compactly generated tensor triangulated category, and $R= \pi_{*} A$ is the stable homotopy algebra of~$A$; thus if the latter satisfies~\eqref{it:regular} all our results apply to~$D(A)$.
Shamir~\cite{Shamir12} already treated this example under the additional hypotheses that $\pi_*A$  has finite Krull dimension. 
Working with $\infty$-categories and $\mathrm E_\infty$-rings, Mathew~\cite[Theorem~1.4]{Mathew14app} established the classification of thick subcategories as in Corollary~\ref{cor:main_thick} for the case when $\pi_*A$ is even periodic and $\pi_0A$ regular noetherian. Remarkably, in the special case of $S$-algebras defined over~$\mathbb Q$, Mathew was also able to prove the classification of thick subcategories only assuming $\pi_*A$ noetherian, i.e., without any regularity hypothesis; see~\cite[Theorem~1.4]{Mathew14}.
\end{example}

The next two well-known examples show that neither hypothesis \eqref{it:affine} nor \eqref{it:regular} can be weakened with impunity.

\begin{example}
The derived category $\mathcal T=D(\mathbb P_k^1)$ of the projective line over a field~$k$ is an example
where $R=\End^*(\unit)\simeq k$ certainly satisfies \eqref{it:regular} but \eqref{it:affine} does not hold. Indeed $\rho$ can be identified with the structure map $\mathbb P_k^1 \to \Spec k$ and is therefore far from injective in this case; see \cite[Remark~8.2]{Balmer10b}.
\end{example}

\begin{example}
If $\mathcal T=D(A)$ is the derived category of a commutative (ungraded) ring~$A$, Theorem~\ref{thm:main_spectrum} and the classification of thick subcategories always hold by a result of Thomason~\cite{Thomason97} (see \cite[Proposition~8.1]{Balmer10b});  the classification of localizing subcategories and the telescope conjecture hold if $A$ is noetherian by Neeman~\cite{Neeman92a}.
On the other hand, Keller~\cite{Keller94b} found examples of \emph{non-noetherian} rings~$A$ for which the two latter results fail.
\end{example}

In view of these examples, it would be interesting to know how far our weak regularity hypothesis~\eqref{it:regular} can be weakened in general. Would noetherian suffice?

\section{Recollections}
\label{sec:recollections}

Let $\mathcal K$ be an essentially small tensor triangulated category.

For any two objects $X,Y\in \mathcal K$, consider the $\mathbb Z$-graded group $\Hom^*_\mathcal K(X,Y)=\bigoplus_{i\in \mathbb Z} \Hom_{\mathcal K}(X, \Sigma^i Y)$.
Recall that the symmetric tensor product of $\mathcal K$ canonically induces on $R:= \Hom^*_\mathcal K(\unit,\unit)$ the structure of a graded commutative\footnote{To be precise, graded commutativity means here that $fg = \epsilon^{|f| |g|} gf$ for any  two homogeneous elements $f\in \Hom_\mathcal K(\unit, \Sigma^{|f|}\unit)$ and $g\in \Hom_\mathcal K(\unit, \Sigma^{|g|}\unit)$, where $\epsilon \in R^0$ is a constant with $\epsilon^2=1$ induced by the symmetry isomorphism $\Sigma \unit \otimes \Sigma \unit \stackrel{\sim}{\to} \Sigma \unit \otimes \Sigma \unit$. In most cases we have $\epsilon = -1$, e.g.\ if $\mathcal K$ admits a symmetric monoidal model, but usually no extra difficulty arises by allowing the general case. Of course, this is immaterial for $R$ even.} ring, and on each $\Hom^*_\mathcal K(X,Y)$ the structure of a (left and right) graded $R$-module. The composition of maps in $\mathcal K$ and the tensor functor $-\otimes-$ are (graded) bilinear for this action. 
See \cite[\S3]{Balmer10b} for details.

Since we are using cohomological gradings, we write $H^*X$ for the $R$-module $\Hom^*_\mathcal K(\unit, X)$ and call it the \emph{cohomology of~$X$}.
\subsection*{Supports for graded modules}
We denote by $\Spec R$ the Zariski spectrum of all homogeneous prime ideals in~$R$. 
If $M$ is an $R$-module (always understood to be graded) and $\mathfrak p\in \Spec R$, the graded localization of $M$ at $\mathfrak p$ is the $R$-module $M_\mathfrak p$ obtained by inverting the action of all the \emph{homogeneous} elements in $R\smallsetminus \mathfrak p$. The \emph{big support} of $M$ is the following subset of the spectrum: 
\[
\SuppR M = \{ \mathfrak p \in \Spec R \mid M_\mathfrak p \neq 0\} \,.
\]
Since our graded ring $R$ is noetherian we also dispose of the \emph{small support}, defined in terms of the indecomposable injective $R$-modules $E(R/\mathfrak p)$:
\[
\suppR M = \{\mathfrak p \mid E(R/\mathfrak p) \textrm{ occurs in the minimal injective resolution of }M \} \,.
\]
We recall from \cite[\S2]{BensonIyengarKrause08} some well-known properties of supports. In general we have $\suppR M \subseteq \SuppR M$. If $M$ is finitely generated, these two sets are equal and also coincide with the Zariski closed set $V(\mathrm{Ann}_R M)$. For a general~$M$, $\SuppR M$ is always \emph{specialization closed}: if it contains any point $\mathfrak p$ then it must contain its closure $V(\mathfrak p)=\{\mathfrak q\mid \mathfrak p\subseteq \mathfrak q\}$. In fact $\SuppR M$ is equal to the specialization closure of $\suppR M$: $\SuppR M= \bigcup_{\mathfrak p\in \suppR M} V(\mathfrak p)$.
The small support plays a fundamental role in the Benson-Iyengar-Krause stratification theory, but in this note it will only appear implicitly.

The next lemma follows by a standard induction on the length of the objects.

\begin{lemma} \label{lemma:supports} If $\mathcal K= \Thick(\unit)$ is affine and $R$ is noetherian, the graded $R$-module $\Hom^*_\mathcal K(X,Y)$ is finitely generated for all $X,Y\in \mathcal K$.
\qed
\end{lemma}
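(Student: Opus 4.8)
The plan is to fix one variable at a time and show that the full subcategory of objects in the other variable for which the graded $\Hom$ is finitely generated forms a thick subcategory containing~$\unit$; since $\mathcal K=\Thick(\unit)$, such a subcategory must be all of~$\mathcal K$. The induction is on the length of the objects precisely in the sense that thickness is exactly closure under the three building operations (suspension, cones, retracts).

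The only ring-theoretic input I would record first is that, because $R$ is (graded) noetherian, the full subcategory of finitely generated graded $R$-modules is a Serre subcategory of the category of all graded $R$-modules: it is closed under graded submodules (submodules of finitely generated modules over a noetherian ring are finitely generated), under graded quotients, and under extensions. In particular it is closed under direct summands. Then, for a fixed object $Y\in\mathcal K$, consider
\[
\mathcal A_Y := \{ X \in \mathcal K \mid \Hom^*_{\mathcal K}(X, Y) \text{ is a finitely generated } R\text{-module} \} .
\]
I claim $\mathcal A_Y$ is thick. It is closed under $\Sigma$ and $\Sigma^{-1}$ because $\Hom^*_{\mathcal K}(\Sigma^{\pm1}X, Y)$ is, up to a shift of the grading, the same $R$-module as $\Hom^*_{\mathcal K}(X, Y)$. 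It is closed under retracts because $\Hom^*_{\mathcal K}(-, Y)$ carries a retract of $X$ to a direct summand of $\Hom^*_{\mathcal K}(X, Y)$. Finally, if $X' \to X \to X'' \to \Sigma X'$ is a distinguished triangle with $X', X'' \in \mathcal A_Y$, the induced long exact sequence of graded $R$-modules
\[
\cdots \to \Hom^*_{\mathcal K}(X'', Y) \to \Hom^*_{\mathcal K}(X, Y) \to \Hom^*_{\mathcal K}(X', Y) \to \cdots
\]
exhibits $\Hom^*_{\mathcal K}(X, Y)$ as an extension of a submodule of $\Hom^*_{\mathcal K}(X', Y)$ by a quotient of $\Hom^*_{\mathcal K}(X'', Y)$, hence as finitely generated by the Serre property above. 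By the evident contravariant analogue, the same reasoning applied to the first variable shows that for fixed $X$ the subcategory $\mathcal B_X := \{ Y \in \mathcal K \mid \Hom^*_{\mathcal K}(X, Y) \text{ finitely generated}\}$ is thick too.

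Now I would invoke the affine hypothesis twice. First, $\mathcal B_\unit = \{ Y \in \mathcal K \mid H^*Y \text{ finitely generated}\}$ is thick and contains $\unit$, since $H^*\unit = R$ is finitely generated over itself; hence $\mathcal B_\unit \supseteq \Thick(\unit) = \mathcal K$, i.e.\ $H^*Y$ is finitely generated for every $Y\in\mathcal K$. Second, fixing an arbitrary $Y\in\mathcal K$, this last fact says exactly that $\unit\in\mathcal A_Y$ (because $\Hom^*_{\mathcal K}(\unit, Y) = H^*Y$), and since $\mathcal A_Y$ is thick we again get $\mathcal A_Y \supseteq \Thick(\unit) = \mathcal K$. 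This yields finite generation of $\Hom^*_{\mathcal K}(X, Y)$ for all $X, Y \in \mathcal K$.

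There is no genuine obstacle; the two points that need a little care are the order of the two-step induction — one must first dispose of the case $X=\unit$ before letting $Y$ range over all of $\mathcal K$ — and the verification that ``finitely generated'' is preserved along the long exact sequence attached to a triangle, which is just the Serre-subcategory property of finitely generated modules over the noetherian ring~$R$.
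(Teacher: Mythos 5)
Your proof is correct and is exactly the ``standard induction on the length of the objects'' that the paper leaves to the reader: you fix one variable, observe that the objects in the other variable for which the graded Hom module is finitely generated form a thick subcategory (using that finitely generated graded modules over the noetherian ring $R$ form a Serre subcategory, closed under summands), and conclude by the affine hypothesis $\mathcal K=\Thick(\unit)$. The two-pass structure --- first $\mathcal B_\unit=\mathcal K$, then $\mathcal A_Y=\mathcal K$ for each $Y$ --- is the right way to organize it, and all the verifications (suspensions, retracts, and the long-exact-sequence/Serre argument for cones) are accurate.
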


\subsection*{The comparison map of spectra}
Recall from \cite{Balmer05a} that, as a set, the spectrum $\Spc \mathcal K$ is defined to be the collection of all proper thick subcategories $\mathcal P\subsetneq \mathcal K$ which are \emph{prime tensor ideals}: $X\otimes Y\in \mathcal P$ $\Leftrightarrow$ $X\in \mathcal P$ or $Y\in \mathcal P$.
For every $\mathcal P\in \Spc \mathcal K$, let $\rho_\mathcal K(\mathcal P)$ denote the ideal of $\Spec R$ generated by the set of homogeneous elements $\{f\colon \unit \to \Sigma^{|f|}\unit \mid \cone(f) \not\in \mathcal P \}$.
By \cite[Theorem~5.3]{Balmer10b}, the assignment $\mathcal P\mapsto \rho_{\mathcal K}(\mathcal P)$ defines a continuous map $\rho_\mathcal K\colon \Spc \mathcal K\to \Spec R$, natural in~$\mathcal K$. 
Moreover, the two spaces $\Spc \mathcal K$ and $\Spec R$ are \emph{spectral} in the sense of Hochster~\cite{Hochster69}, and $\rho_\mathcal K$ is a \emph{spectral map} in that the preimage of a compact open set is again compact. 

\begin{lemma} \label{lemma:bij_homeo}
If $\rho_\mathcal K$ is bijective then it is a homeomorphism.
\end{lemma}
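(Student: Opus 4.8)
The plan is to bring in the fact that $\Spc\mathcal K$ and $\Spec R$ carry, besides their spectral topologies, the finer \emph{constructible} (Hochster patch) topologies, for which the statement is essentially free, and then to recover the spectral topology from the constructible one by hand, using the specialization order.

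Since both spaces are spectral and $\rho:=\rho_\mathcal K$ is a spectral map, $\rho$ is continuous for the constructible topologies; as these make both spaces compact Hausdorff \cite{Hochster69}, the continuous bijection $\rho$ is a homeomorphism for them, and therefore transports constructible subsets to constructible subsets in both directions. Now the spectral topology of a spectral space is determined by its constructible topology together with its specialization preorder: a subset is spectral-closed if and only if it is constructible-closed and stable under specialization. A continuous map preserves specialization, so by the preceding paragraph $\rho$ will be a homeomorphism for the spectral topologies as soon as $\rho^{-1}$ also preserves specialization, equivalently as soon as $\rho$ is a closed map. Since the closed subsets of $\Spc\mathcal K$ with quasi-compact open complement are exactly the supports $\supp(a)$ for $a\in\mathcal K$ \cite{Balmer05a}, these form a basis for the closed subsets, and the injective map $\rho$ commutes with arbitrary intersections, the whole statement reduces to:
\[
\text{for every } a\in\mathcal K,\quad \rho\big(\supp(a)\big)\text{ is Zariski-closed in } \Spec R.
\]

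It is precisely here that the hypothesis "$\rho$ bijective" and the specific nature of $\rho$ must be used, since a general bijective spectral map of spectral spaces need not be a homeomorphism (e.g. the identity map from a two-point discrete space onto the Sierpi\'nski space). The approach I would take is to identify $\supp(a) = \rho^{-1}(Z_a)$ for a suitable specialization-closed subset $Z_a\subseteq\Spec R$; a natural candidate, suggested by Balmer's formula $\rho^{-1}(V(f)) = \supp(\cone f)$ \cite{Balmer10b}, is $Z_a = \SuppR\Hom^*_\mathcal K(a,a)$. Granting such an identification, bijectivity gives $\rho(\supp(a)) = \rho(\rho^{-1}(Z_a)) = Z_a$, which is then constructible-closed (by the first paragraph, since $\supp(a)$ is spectral-closed) as well as specialization-closed, hence Zariski-closed, and we are done. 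Of the two inclusions, $\rho^{-1}(Z_a)\subseteq\supp(a)$ is the easier one — if $a\in\mathcal P$ then $a$ becomes zero in Balmer's local tensor triangulated category at $\mathcal P$, whence $\Hom^*_\mathcal K(a,a)$ localizes to zero at $\rho(\mathcal P)$ — whereas the reverse inclusion $\supp(a)\subseteq\rho^{-1}(Z_a)$ is where bijectivity of $\rho$ genuinely enters, through the surjectivity of the induced comparison maps on the relevant local endomorphism rings; I expect this reverse inclusion to be the main obstacle of the proof.
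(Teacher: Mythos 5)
Your first two paragraphs are sound, but they are essentially a reproof of the very statement the paper invokes as a black box, namely Hochster's Proposition~15 from~\cite{Hochster69}: a spectral map of spectral spaces is a homeomorphism if and only if it is a bijection and an order isomorphism for the specialization order. The paper simply cites this and then is done in one line, by observing that the comparison map is inclusion-reversing in both directions, $\mathcal Q\subseteq\mathcal P \Leftrightarrow \rho(\mathcal Q)\supseteq\rho(\mathcal P)$, so that a bijective $\rho$ is automatically an order isomorphism. Redoing the Hochster argument via the constructible topology is fine but unnecessary; the point is that you should have looked for (or recalled) the order-reversal property of $\rho$, which settles the matter immediately.

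The genuine gap is in your third paragraph, and you flag it yourself: you reduce the problem to identifying $\supp(a)$ with $\rho^{-1}(Z_a)$ for a specialization-closed $Z_a$, propose $Z_a=\SuppR\Hom_\mathcal K^*(a,a)$, and then say you \emph{expect} the hard inclusion $\supp(a)\subseteq\rho^{-1}(Z_a)$ to be the main obstacle rather than proving it. This is where the proof stops being a proof. Worse, the identification you propose is not actually established anywhere at this level of generality; the paper only proves the analogous identification $\SuppR H^*X=\rho(\supp X)$ (Lemma~\ref{lemma:supps_K}) \emph{after} the main theorem, and under the additional standing hypotheses~\eqref{it:affine} and~\eqref{it:regular}, which Lemma~\ref{lemma:bij_homeo} does not assume. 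Even your ``easier'' inclusion is shakier than it looks: if $a\in\mathcal P$ then $a$ is zero in Balmer's local category $\mathcal K/\mathcal P$, but $\Hom^*_\mathcal K(a,a)_{\rho(\mathcal P)}$ computes $\Hom$ in the central localization $\mathcal K_{\rho(\mathcal P)}$, which is a \emph{coarser} quotient than $\mathcal K/\mathcal P$ (one only kills cones of central elements, not all of $\mathcal P$); vanishing in the finer quotient does not give vanishing of the graded endomorphism ring after central localization. So the reduction is correct in spirit, but the step that carries all the weight is missing, and the proposed lemma backing it is not justified.

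\end{document}
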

\begin{proof}
This is an immediate consequence of \cite[Proposition~15]{Hochster69}, which says that for a spectral map of spectral topological spaces to be a homeomorphism it suffices that it is an order isomorphism for the specialization order of the two spaces.
Recall that the specialization order is defined for the points of any topological space by $x \geq y \Leftrightarrow x \in \overline{\{y\}}$.
Indeed $\rho:=\rho_\mathcal K$ is inclusion reversing, $\mathcal Q \subseteq \mathcal P \Leftrightarrow \rho(\mathcal Q) \supseteq \rho(\mathcal P)$, hence it maps the closure $\overline{\{\mathcal P\}}= \{\mathcal Q\mid \mathcal Q\subseteq \mathcal P\}$ in $\Spc \mathcal K$ of any point $\mathcal P$ to the Zariski closure $V(\rho(\mathcal P))=\{\mathfrak q\mid \mathfrak q\supseteq \rho(\mathcal P)\}$ in $\Spec R$ of the corresponding point.
\end{proof}

\subsection*{Central localization}
For every prime ideal $\mathfrak p$ of the graded central ring $R$ of $\mathcal K$, there exists by \cite[Theorem~3.6]{Balmer10b} a tensor triangulated category $\mathcal K_\mathfrak p$ having the same objects as $\mathcal K$ and such that its graded Hom modules are the localizations
\[
\Hom^*_{\mathcal K_\mathfrak p}(X,Y) = \Hom^*_{\mathcal K}(X,Y)_{\mathfrak p} \,.
\]
In particular the graded central ring of $\mathcal K_\mathfrak p$ is the local ring~$R_\mathfrak p$. There is a canonical exact functor $q_\mathfrak p \colon \mathcal K\to \mathcal K_\mathfrak p$, which is in fact the Verdier quotient by the thick tensor ideal generated by $\{\cone(f)\in \mathcal K \mid f \in R\smallsetminus \mathfrak p \textrm{ homogeneous} \}$. For emphasis, we will sometimes write $X_\mathfrak p$ for $X=q_{\mathfrak p}X$ when considered as an object of~$\mathcal K_\mathfrak p$.

Clearly if $\mathcal K$ is generated by $\unit$ then $\mathcal K_\mathfrak p$ is generated by $\unit_\mathfrak p$. 
Later we will use the fact that if a tensor triangulated category is generated by its unit then every thick subcategory is automatically a tensor ideal.

Let $\ell_{\mathfrak p}\colon R\to R_\mathfrak p$ denote the localization map between the graded central rings of the two categories. By \cite[Theorem~5.4]{Balmer10b}, we have a pullback square of spaces 
\begin{equation} \label{eq:pullback_spectra}
\xymatrix{
\Spc (\mathcal K_\mathfrak p) \, \ar[d]_{\rho_{\mathcal K_\mathfrak p}} \ar@{^{(}->}[rr]^-{\Spc (q_{\mathfrak p})} &&
 \Spc(\mathcal K) \ar[d]^{\rho_{\mathcal K}} \\
\Spec(R_{\mathfrak p}) \, \ar@{^{(}->}[rr]^-{\Spec(\ell_{\mathfrak p})} && \Spec(R)
}
\end{equation}
where the horizontal maps are injective. 

\subsection*{Koszul objects}
We adapt some convenient notation  from \cite{BensonIyengarKrause08}. For any object $X\in \mathcal K$ and homogeneous element $f\in R$, let $X\koszul f := \cone(f \cdot X)$ be any choice of mapping cone for the map $f\cdot X\colon \Sigma^{-|f|}X\to X$ given by the $R$-action.
If $f_1,\ldots, f_n$ is a finite sequence of homogeneous elements, define recursively $X_0:=X$ and  $X_i:=X\koszul (f_1,\ldots,f_i):= (X\koszul (f_1,\ldots, f_{i-1}) )\koszul f_i$ for $i\in \{1,\ldots, n\}$. Thus by construction we have exact triangles
 \begin{equation}  \label{eq:triangle_Xi}
\xymatrix{
\Sigma^{-|f_i|}  X_{i-1} \ar[r]^-{f_i \cdot X_{i-1}} & X_{i-1} \ar[r] & X_i \ar[r] & \Sigma^{-|f_i|+1} X_{i-1} \,,
}
\end{equation}
and moreover, since the tensor product is exact, we have isomorphisms
\[
X\koszul (f_1,\ldots,f_i) 
\simeq X \otimes \unit \koszul f_1 \otimes \ldots \otimes \unit\koszul f_i
\] 
for all~$i\in \{1,\ldots, n\}$. In the following, we will perform this construction \emph{inside the $\mathfrak p$-local category~$\mathcal K_\mathfrak p$}.

We need the following triangular version of the Nakayama lemma, for $\mathcal K$ affine. 

\begin{lemma} \label{lemma:ttNAK}
If $X\in \mathcal K_\mathfrak p$ is any object and $f_1,\ldots,f_n$ is a set of homogenous generators for $\mathfrak pR_\mathfrak p$, then in $\mathcal K_\mathfrak p$ we have $X=0$ if and only if $X\koszul (f_1,\ldots, f_n)=0$.
\end{lemma}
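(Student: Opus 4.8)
The plan is to reduce the statement to the ordinary graded Nakayama lemma applied to the cohomology modules $H^*Y = \Hom^*_{\mathcal K_{\mathfrak p}}(\unit_{\mathfrak p}, Y)$, using the fact that, since $\mathcal K_{\mathfrak p}$ is affine (generated by $\unit_{\mathfrak p}$ as a thick subcategory), an object $Y \in \mathcal K_{\mathfrak p}$ vanishes if and only if its cohomology $H^*Y$ vanishes. The nontrivial direction is: if $X \koszul(f_1,\ldots,f_n) = 0$ then $X = 0$; the converse implication $X = 0 \Rightarrow X\koszul(f_1,\ldots,f_n) = 0$ is immediate from the definition of the Koszul object as an iterated cone.

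\textbf{Step 1: Cohomology detects vanishing in the affine local category.} First I would record that $\mathcal K_{\mathfrak p} = \Thick(\unit_{\mathfrak p})$ and that a thick subcategory of an affine tensor triangulated category is automatically a tensor ideal (both noted in the Central localization subsection). The full subcategory $\{Y \in \mathcal K_{\mathfrak p} \mid H^*Y = 0\}$ is thick, so if it contained $\unit_{\mathfrak p}$ it would be all of $\mathcal K_{\mathfrak p}$ — but $H^*\unit_{\mathfrak p} = R_{\mathfrak p} \neq 0$. More usefully: since $\mathcal K_{\mathfrak p}$ is generated by $\unit_{\mathfrak p}$, the functor $H^* = \Hom^*_{\mathcal K_{\mathfrak p}}(\unit_{\mathfrak p}, -)$ is conservative on $\mathcal K_{\mathfrak p}$, i.e.\ $Y = 0 \iff H^*Y = 0$. (This is the standard fact that a generator of a triangulated category detects isomorphisms.)

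\textbf{Step 2: Relate $H^*(X\koszul(f_1,\ldots,f_n))$ to a Koszul complex on $H^*X$.} Applying $H^* = \Hom^*_{\mathcal K_{\mathfrak p}}(\unit_{\mathfrak p}, -)$ to the defining triangle \eqref{eq:triangle_Xi} for $X_i = X_{i-1} \koszul f_i$ gives a long exact sequence in which multiplication by $f_i$ on $H^*X_{i-1}$ appears, so that $H^*X_i$ fits in a short exact sequence $0 \to (H^*X_{i-1})/f_i \to H^*X_i \to (0 :_{H^*X_{i-1}} f_i)[\,\cdot\,] \to 0$ (with a degree shift). Iterating, one sees that $H^*X_n = 0$ forces, at the top of this tower of extensions, the vanishing $(H^*X)/(f_1,\ldots,f_n)(H^*X) = 0$; more carefully, a downward induction on $i$ shows $H^*X_i = 0 \Rightarrow H^*X_{i-1} = (f_i)\,H^*X_{i-1}$, and feeding these together yields $H^*X = \mathfrak{p}R_{\mathfrak p} \cdot H^*X$ once we know $H^*X_n = 0$. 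Here I use that $f_1,\ldots,f_n$ generate $\mathfrak p R_{\mathfrak p}$.

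\textbf{Step 3: Apply graded Nakayama and conclude.} By Lemma~\ref{lemma:supports} (with $\mathcal K$ affine and $R$ noetherian), $H^*X$ is a finitely generated $R_{\mathfrak p}$-module — finitely generated over $R$ hence over the localization. Since $R_{\mathfrak p}$ is a (graded) local ring with maximal ideal $\mathfrak p R_{\mathfrak p}$ and $H^*X = \mathfrak p R_{\mathfrak p} \cdot H^*X$, the graded Nakayama lemma gives $H^*X = 0$, whence $X = 0$ by Step 1. The main obstacle is the bookkeeping in Step 2: the long exact cohomology sequences only give $H^*X_i$ as an extension of a quotient of $H^*X_{i-1}$ by a submodule (a torsion piece), not simply the quotient, so one must argue carefully — e.g.\ by downward induction starting from $H^*X_n=0$ and using at each stage that vanishing of the extension $H^*X_i$ forces vanishing of both its sub and quotient, hence $f_i \cdot H^*X_{i-1} = H^*X_{i-1}$ — rather than pretending $H^*X_n$ is literally the Koszul homology $H_0$. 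Once phrased this way the induction is clean and the finite generation plus locality close the argument.
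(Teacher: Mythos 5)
Your proposal is correct and takes essentially the same approach as the paper: reduce to cohomology via conservativity of $H^* = \Hom^*_{\mathcal K_{\mathfrak p}}(\unit_{\mathfrak p},-)$ (using affineness), apply the long exact sequences from the Koszul triangles together with finite generation (Lemma~\ref{lemma:supports}) and graded Nakayama. The only cosmetic difference is that you run a downward induction from $H^*X_n=0$ whereas the paper argues the contrapositive upward ($H^*X \neq 0 \Rightarrow H^*X_n \neq 0$); these are the same argument.
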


\begin{proof} 
Since $\mathcal K$ and thus $\mathcal K_\mathfrak p$  are generated by their tensor unit, is suffices to show that $H^*X_\mathfrak p =0$ if and only if $H^*(X\koszul (f_1,\ldots, f_n))_\mathfrak p=0$, and the latter can be proved as in \cite[Lemma 5.11\,(3)]{BensonIyengarKrause08}. 
We give the easy argument for completeness.

With the above notation, by taking cohomology $H^*=\Hom^*_{\mathcal K_{\mathfrak p}}(\unit_\mathfrak p, -)$ of the triangle \eqref{eq:triangle_Xi} of $\mathcal K_\mathfrak p$ we obtain the long exact sequence of $R_\mathfrak p$-modules
 \begin{equation*}
\xymatrix{
\ldots \ar[r] & H^{*-|f_i|} X_{i-1} \ar[r]^-{f_i} & H^{*} X_{i-1} \ar[r] & H^* X_i \ar[r] & H^{*-|f_i|+1} X_{i-1} 
\ar[r] & \ldots
}
\end{equation*}
where each module is finitely generated by Lemma~\ref{lemma:supports}.
Since $f_i \in \mathfrak p$, if $H^* X_{i-1}\neq 0$ the first map in the sequence is not invertible by the Nakayama lemma, hence $H^*X_i\neq0$. The evident recursion shows that $H^*X\neq 0$ implies $H^*X_n\neq0$. 
The very same exact sequences also show that if $H^*X=0$ then $H^*X_n=0$.
\end{proof}

\section{Thick subcategories}
\label{sec:thick}

Assume from now on that $\mathcal K$ satisfies the conditions \eqref{it:affine} and \eqref{it:regular} of Theorem~\ref{thm:main_spectrum}. 
\subsection*{Residue field objects}
By hypothesis, for every prime ideal $\mathfrak p\in \Spec R$ there exists a regular sequence $f_1,\ldots,f_n$ of homogeneous non-zero-divisors of $R_{\mathfrak p}$ which generate the ideal~$\mathfrak p R_\mathfrak p$.
Choose one such sequence once and for all, and construct the associated Koszul object 
\[
K(\mathfrak p) := \unit_\mathfrak p \koszul (f_1,\ldots, f_n) \simeq \unit_\mathfrak p\koszul f_1 \otimes \ldots \otimes \unit_\mathfrak p\koszul f_n
\]
in the $\mathfrak p$-local tensor triangulated category $\mathcal K_\mathfrak p$. 

\begin{lemma} \label{lemma:zero_action}
For every object $X\in \mathcal K_\mathfrak p$ and every~$i\in \{1,\ldots,n\}$, each element $f$ of the ideal $(f_1,\ldots,f_i)\subset R_\mathfrak p$ acts as zero on $X \koszul (f_1,\ldots,f_i)$, i.e., $f\cdot {X \koszul (f_1,\ldots,f_i)}=0$.
\end{lemma}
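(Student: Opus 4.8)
The statement to prove is Lemma 3.2: for every object $X\in \mathcal K_\mathfrak p$ and every $i\in\{1,\dots,n\}$, each $f\in(f_1,\dots,f_i)\subset R_\mathfrak p$ acts as zero on $X\koszul(f_1,\dots,f_i)$. My plan is to reduce to the single-element case and then argue by induction on $i$, using the defining triangles \eqref{eq:triangle_Xi} together with the fact that tensoring a Koszul object of $\unit_\mathfrak p$ commutes with the $R_\mathfrak p$-action.

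First I would record the base observation: for any object $Y\in\mathcal K_\mathfrak p$ and any single homogeneous $g\in R_\mathfrak p$, the element $g$ acts as zero on $Y\koszul g=\cone(g\cdot Y)$. This is the standard fact that a map becomes null after coning itself off: in the triangle $\Sigma^{-|g|}Y\xrightarrow{g\cdot Y}Y\xrightarrow{u}Y\koszul g\xrightarrow{} \Sigma^{-|g|+1}Y$, the composite $u\circ(g\cdot Y)=0$, and since $g\cdot(Y\koszul g)$ factors through $u$ (it equals $u$ composed with $g\cdot Y$ up to the graded identifications coming from naturality of the $R$-action), one gets $g\cdot(Y\koszul g)=0$. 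More precisely, the $R_\mathfrak p$-action is central, so $g\cdot(Y\koszul g)$ is obtained by applying $\cone$ to the commuting square given by multiplication by $g$ on the triangle $\Sigma^{-|g|}Y\to Y$, and on the cone this map is homotopic to zero because one vertical leg factors through the other. Alternatively, use the isomorphism $Y\koszul g\simeq Y\otimes(\unit_\mathfrak p\koszul g)$ and the fact that $g$ acts as zero on $\unit_\mathfrak p\koszul g$ (the case $Y=\unit_\mathfrak p$), together with bilinearity of $\otimes$ for the $R$-action.

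Now I would do the induction on $i$. Write $X_i:=X\koszul(f_1,\dots,f_i)$, so $X_i=X_{i-1}\koszul f_i$. For $i=1$ the claim is exactly the base observation applied to $Y=X$ and $g=f_1$. For the inductive step, suppose every element of $(f_1,\dots,f_{i-1})$ acts as zero on $X_{i-1}$. I must show every element of $(f_1,\dots,f_i)$ acts as zero on $X_i=X_{i-1}\koszul f_i$. By linearity it suffices to treat the generators. For $f_i$ itself, apply the base observation with $Y=X_{i-1}$, $g=f_i$. For $f_j$ with $j<i$: consider the triangle $\Sigma^{-|f_i|}X_{i-1}\xrightarrow{f_i\cdot X_{i-1}}X_{i-1}\xrightarrow{} X_i\xrightarrow{} \Sigma^{-|f_i|+1}X_{i-1}$ of \eqref{eq:triangle_Xi}. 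Multiplication by $f_j$ acts on this whole triangle (centrality), and on the two outer terms $\Sigma^{\pm}X_{i-1}$ it is zero by the inductive hypothesis. Hence multiplication by $f_j$ on $X_i$ is a map between the middle terms of a morphism of triangles whose two outer components vanish; a routine diagram chase (the middle map factors through the outer maps on either side) then forces $f_j\cdot X_i$ to factor as $X_i\to\Sigma^{-|f_i|+1}X_{i-1}\xrightarrow{0}$ followed by something, or equivalently $f_j\cdot X_i$ itself is zero. Concretely: since $f_j\cdot X_{i-1}=0$, the induced endomorphism of the triangle has zero components at the two ends, so by the long exact sequence / the standard fact that an endomorphism of a distinguished triangle with two zero legs is itself zero on the third term when composed appropriately — more cleanly, $f_j\cdot X_i$ equals the composite $X_i\to \Sigma^{-|f_i|+1}X_{i-1}\xrightarrow{\Sigma(f_j\cdot X_{i-1})=0}\Sigma^{-|f_i|+1}X_{i-1}\to\Sigma X_i$ minus a correction, but in fact the argument via $X_i\simeq X\otimes(\unit_\mathfrak p\koszul f_1)\otimes\cdots\otimes(\unit_\mathfrak p\koszul f_i)$ is cleanest: $f_j$ acts as zero on the tensor factor $\unit_\mathfrak p\koszul f_j$ by the base case, and bilinearity of $\otimes$ then shows $f_j$ acts as zero on the whole tensor product, hence on $X_i$.

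Indeed, I expect the slick route to be entirely via the tensor decomposition $X_i\simeq X\otimes\bigotimes_{k=1}^{i}(\unit_\mathfrak p\koszul f_k)$ recorded just before the lemma: it suffices to check that each $f_k$ (for $k\le i$) kills the factor $\unit_\mathfrak p\koszul f_k$, which is the one-element base case with $Y=\unit_\mathfrak p$, and then invoke that the $R_\mathfrak p$-action on a tensor product is induced factor-wise and is bilinear, so a multiplication that vanishes on one factor vanishes on the product. This sidesteps any delicate diagram chase. The one point requiring a line of care — and the only place I see a potential subtlety — is the base case itself, i.e.\ verifying cleanly that $f$ acts as zero on $\cone(f\cdot\unit_\mathfrak p)$; this is where I would either cite the analogous statement in \cite[\S5]{BensonIyengarKrause08} or give the two-line argument that multiplication by $f$ on the triangle $\Sigma^{-|f|}\unit_\mathfrak p\xrightarrow{f}\unit_\mathfrak p\to\unit_\mathfrak p\koszul f$ has a nullhomotopic leg, forcing the induced self-map of the cone to vanish. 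Everything else is formal.
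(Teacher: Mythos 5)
Your overall reduction is the same as the paper's: pass through the tensor decomposition $X\koszul(f_1,\dots,f_i)\simeq X\otimes(\unit_\mathfrak p\koszul f_1)\otimes\cdots\otimes(\unit_\mathfrak p\koszul f_i)$ and $R_\mathfrak p$-bilinearity of $\otimes$, so that it suffices to show $f_i$ kills $\unit_\mathfrak p\koszul f_i$. However, your treatment of that base case contains a genuine error.

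It is \emph{not} a standard fact that $g$ acts as zero on $Y\koszul g$ (or on $\cone(g)$) in a general tensor triangulated category. The octahedral/rotation argument only gives that $g\cdot(\unit_\mathfrak p\koszul g)$ \emph{factors through} the connecting map $\unit_\mathfrak p\koszul g\to\Sigma^{-|g|+1}\unit_\mathfrak p$; it does not make it zero, because there may well be nonzero maps $\Sigma^{-|g|+1}\unit_\mathfrak p\to\Sigma^{|g|}\unit_\mathfrak p\koszul g$. A concrete counterexample: in the stable homotopy category, multiplication by $2$ on the mod-$2$ Moore spectrum $S/2=\cone(2)$ is nonzero (it equals the composite through $\eta$). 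Accordingly, the statement in \cite[Lemma~5.11(1)]{BensonIyengarKrause08} that you propose to cite is about $g^2$ killing $X\koszul g$, not $g$ itself. So the ``two-line argument'' that one nullhomotopic leg forces the induced self-map of the cone to vanish is simply false, and neither citation you suggest establishes what you need.

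This is precisely where the paper has to use the extra structure in hypothesis~(b): since $R$ (hence $R_\mathfrak p$) is concentrated in even degrees and $f_i$ is a non-zero-divisor, the long exact sequence in $H^*$ shows $H^{\mathrm{odd}}(\unit_\mathfrak p\koszul f_i)=0$, and the offending correction map $h\colon\Sigma^{-|f_i|+1}\unit_\mathfrak p\to\Sigma^{|f_i|}\unit_\mathfrak p\koszul f_i$ lives in odd cohomological degree, hence vanishes. As Remark~\ref{rem:hyp} records, this is the \emph{only} place those two assumptions are used; your proof attempts to bypass them entirely, which cannot work. You correctly flagged the base case as the only subtle point, but then dismissed it with an argument that does not hold and missed that this is where the weak regularity hypothesis earns its keep.
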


\begin{proof}
Recall that, as an immediate consequence of the $R_\mathfrak p$-bilinearity of the composition in~$\mathcal K_\mathfrak p$, the elements of $R_\mathfrak p$ acting as zero on an object $Y$ form an ideal (coinciding with the annihilator of the $R_\mathfrak p$-module $\Hom_{\mathcal K_\mathfrak p}^*(Y,Y)$). Thanks to the isomorphism $X\koszul (f_1,\ldots,f_i) \simeq X\otimes \unit_\mathfrak p\koszul f_1 \otimes \ldots \otimes \unit_\mathfrak p\koszul f_i$ and the $R_\mathfrak p$-linearity of the tensor product, it will therefore suffice to prove that $f_i$ acts as zero on $\unit_\mathfrak p\koszul f_i$. Consider the following commutative diagram
\[
\xymatrix{
\Sigma^{-|f_i|}\unit_\mathfrak p \ar[r]^-{f_i} &
 \unit_\mathfrak p \ar[r]^-{g} \ar[d]_{f_i} \ar[dr]_0 & 
  \unit_\mathfrak p \koszul f_i \ar[d]^{f_i} \ar[r] & \Sigma^{-|f_i| +1} \unit_\mathfrak p \ar@{..>}[dl]^-{h} \\
& \Sigma^{|f_i|} \unit_\mathfrak p \ar[r]_-{g} & \Sigma^{|f_i|}\unit_\mathfrak p \koszul f_i &
}
\]
where the top row is the exact triangle defining $\unit_\mathfrak p\koszul f_i$. 
Being the composite of two consecutive maps in a triangle, $g f_i $ is zero. Up to a suspension, this is also the diagonal map in the square. Hence $f_i \cdot \unit_\mathfrak p \koszul f_i$ factors through a map $h$ as pictured.
Since $R$ is even by hypothesis, then $R_\mathfrak p$ is even and we claim that also
\begin{equation} \label{eq:even_koszul}
H^n(\unit_\mathfrak p\koszul f_i) = 0 \quad \textrm{ for all odd } n\,.
\end{equation}
This implies $h=0$ and therefore $f_i\cdot \unit_\mathfrak p\koszul f_i=0$, as required.
To prove the claim, note that the defining triangle of $\unit_\mathfrak p\koszul f_i$ induces the exact sequence
\[
\xymatrix{
R_\mathfrak p^{n-|f_i|} \ar[r]^-{f_i}  & R_\mathfrak p^n \ar[r] & H^n(\unit_\mathfrak p\koszul f_i) \ar[r] &
R_\mathfrak p^{n-|f_i|+1} \ar[r]^-{f_i} & R_\mathfrak p^{n+1}
}
\]
where the first and last maps are injective by the hypothesis that $f_i$ is a non-zero-divisor in~$R_\mathfrak p$. Thus \eqref{eq:even_koszul}, and even $H^*(\unit_\mathfrak p\koszul f_i)\simeq R_\mathfrak p/ (f_i)$, follow immediately.
\end{proof}

\begin{cor} \label{cor:vector_space}
$H^*(X \otimes \residue{p})$ is a graded $k(\mathfrak p)$-vector space for every $X\in \mathcal K_\mathfrak p$.
\end{cor}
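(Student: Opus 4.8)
The plan is to show that the maximal ideal $\mathfrak p R_\mathfrak p$ of the graded-local ring $R_\mathfrak p$ acts as zero on the object $X\otimes \residue{p}$ of $\mathcal K_\mathfrak p$, for every $X\in \mathcal K_\mathfrak p$. Granting this, the $R_\mathfrak p$-module $H^*(X\otimes \residue{p})=\Hom^*_{\mathcal K_\mathfrak p}(\unit_\mathfrak p, X\otimes \residue{p})$ is annihilated by $\mathfrak p R_\mathfrak p$, hence is a graded module over the quotient $R_\mathfrak p/\mathfrak p R_\mathfrak p=k(\mathfrak p)$; since $R$, and therefore $R_\mathfrak p$, is concentrated in even degrees, $k(\mathfrak p)$ is a graded field, so every graded $k(\mathfrak p)$-module is free, i.e.\ a graded $k(\mathfrak p)$-vector space, which is the assertion.

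To carry this out I would first apply Lemma~\ref{lemma:zero_action} with $i=n$ and $X=\unit_\mathfrak p$: every element of the ideal $(f_1,\ldots,f_n)=\mathfrak p R_\mathfrak p$ acts as zero on $\residue{p}=\unit_\mathfrak p\koszul(f_1,\ldots,f_n)$. Next, exactly as recalled in the proof of Lemma~\ref{lemma:zero_action}, the elements of $R_\mathfrak p$ acting as zero on a fixed object $Y$ form an ideal (the annihilator of $\Hom^*_{\mathcal K_\mathfrak p}(Y,Y)$), and by $R_\mathfrak p$-bilinearity of the tensor product the action of a homogeneous $f\in R_\mathfrak p$ on $X\otimes Y$ is realized by $X\otimes(f\cdot Y)$; hence for $f\in\mathfrak p R_\mathfrak p$ we get $f\cdot(X\otimes\residue{p})=X\otimes(f\cdot\residue{p})=0$. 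Finally, since an element of $R_\mathfrak p$ acting as zero on an object $Z$ annihilates $\Hom^*_{\mathcal K_\mathfrak p}(\unit_\mathfrak p,Z)$, we conclude $\mathfrak p R_\mathfrak p\cdot H^*(X\otimes\residue{p})=0$, as needed.

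I expect no genuine obstacle here: the statement is a purely formal consequence of Lemma~\ref{lemma:zero_action} and the bilinearity of $\otimes$ over the graded central ring. The only points worth a remark are that $\residue{p}$ was constructed from one \emph{fixed} regular sequence generating $\mathfrak p R_\mathfrak p$ --- but that is precisely the sequence to which Lemma~\ref{lemma:zero_action} applies, so no compatibility issue arises --- and that the final step "graded module over the graded field $k(\mathfrak p)$ $\Rightarrow$ graded vector space" uses the standard structure theory of modules over a graded field, which is unproblematic since $R$ is even.
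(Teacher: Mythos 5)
Your argument is correct and is essentially the paper's own proof: both derive from Lemma~\ref{lemma:zero_action} that every $f\in\mathfrak p R_\mathfrak p$ acts as zero on $X\otimes\residue{p}$, and then pass to $H^*$ via $R_\mathfrak p$-linearity of composition. The only cosmetic difference is that you invoke the lemma for $\unit_\mathfrak p$ and then tensor up by $R_\mathfrak p$-linearity of $\otimes$, whereas the paper applies the lemma to the given $X$ directly via $X\koszul(f_1,\ldots,f_n)\simeq X\otimes\residue{p}$; this is the same reduction that the lemma's own proof performs internally, so the two routes coincide.
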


\begin{proof}
By Lemma~\ref{lemma:zero_action} together with the $R$-linearity of the tensor product, each $f\in \mathfrak p R_\mathfrak p$ acts as zero on $X \otimes \residue{p}\simeq X \otimes \unit_\mathfrak p\koszul (f_1,\ldots, f_n)$. 
Therefore all such $f$ also act as zero on $H^*(X \otimes \residue{p})$ by the $R$-linearity of composition.
\end{proof}

\begin{lemma}  \label{lemma:koszul_quotient}
There is an isomorphism $H^*(\unit_\mathfrak p\koszul(f_1,\ldots,f_i) )\simeq R_\mathfrak p/(f_1,\ldots,f_i)$ of $R$-modules for all~$i\in\{1,\ldots,n\}$. 
In particular $H^*K(\mathfrak p) $ is isomorphic to the residue field $k(\mathfrak p):= R_{\mathfrak p}/\mathfrak p R_\mathfrak p$. 
\end{lemma}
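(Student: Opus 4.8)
The plan is to prove the statement by induction on $i$, using the defining exact triangles \eqref{eq:triangle_Xi} for the iterated Koszul object $\unit_\mathfrak p \koszul (f_1,\ldots,f_i)$ together with the fact that the sequence $f_1,\ldots,f_n$ is regular in $R_\mathfrak p$. Write $X_i := \unit_\mathfrak p\koszul(f_1,\ldots,f_i)$ as in the recollections, so that $X_0 = \unit_\mathfrak p$. The base case $i=0$ is trivial since $H^*X_0 = R_\mathfrak p$. For the inductive step, apply $H^* = \Hom^*_{\mathcal K_\mathfrak p}(\unit_\mathfrak p, -)$ to the triangle
\[
\xymatrix{
\Sigma^{-|f_i|} X_{i-1} \ar[r]^-{f_i} & X_{i-1} \ar[r] & X_i \ar[r] & \Sigma^{-|f_i|+1} X_{i-1}
}
\]
to obtain a long exact sequence of $R_\mathfrak p$-modules relating $H^*X_{i-1}$, $H^*X_i$, and $H^*X_{i-1}$ again, with the connecting-type map being multiplication by $f_i$ on $H^*X_{i-1} \cong R_\mathfrak p/(f_1,\ldots,f_{i-1})$.

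The key point is then to check that multiplication by $f_i$ is \emph{injective} on $R_\mathfrak p/(f_1,\ldots,f_{i-1})$: this is exactly the statement that $f_i$ is a non-zero-divisor on this quotient, which holds because $f_1,\ldots,f_n$ is a regular sequence in $R_\mathfrak p$ by hypothesis~\eqref{it:regular}. Granting this injectivity, the long exact sequence breaks into short exact sequences
\[
0 \longrightarrow H^{*-|f_i|}X_{i-1} \stackrel{f_i}{\longrightarrow} H^*X_{i-1} \longrightarrow H^*X_i \longrightarrow 0 \,,
\]
whence $H^*X_i \cong \coker\big(f_i \colon H^*X_{i-1}\to H^*X_{i-1}\big) \cong \big(R_\mathfrak p/(f_1,\ldots,f_{i-1})\big)/(f_i) = R_\mathfrak p/(f_1,\ldots,f_i)$, completing the induction. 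One should also note that the isomorphism is one of $R$-modules (equivalently $R_\mathfrak p$-modules), since every map in sight is $R$-linear by the $R$-bilinearity of composition recalled in~\S\ref{sec:recollections}. Taking $i = n$ and recalling that $(f_1,\ldots,f_n) = \mathfrak p R_\mathfrak p$ gives $H^*K(\mathfrak p) \cong R_\mathfrak p/\mathfrak p R_\mathfrak p = k(\mathfrak p)$, which is the ``in particular'' clause.

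The only real subtlety — and it is minor — is the degree bookkeeping in the long exact sequence: because $f_i$ has degree $|f_i|$, the map $H^{*-|f_i|}X_{i-1} \to H^*X_{i-1}$ is a graded map of degree $|f_i|$, so one must be slightly careful to conclude that its cokernel computes $R_\mathfrak p/(f_1,\ldots,f_i)$ as a \emph{graded} module and that injectivity of multiplication by $f_i$ in each degree follows from regularity. Alternatively, and perhaps more cleanly, one can observe that $X_i \simeq \unit_\mathfrak p\koszul f_1 \otimes \cdots \otimes \unit_\mathfrak p \koszul f_i$ and use the last computation in the proof of Lemma~\ref{lemma:zero_action}, namely $H^*(\unit_\mathfrak p\koszul f_i) \cong R_\mathfrak p/(f_i)$, as the $i=1$ case, but the iterated-triangle induction is the most direct route and requires no extra input beyond what is already in the excerpt. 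I do not anticipate any genuine obstacle here; this is a standard Koszul-complex computation transported into the triangulated setting via Lemma~\ref{lemma:supports} (to know the modules are finitely generated) and the regularity hypothesis.
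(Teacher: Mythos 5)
Your proof is correct and follows essentially the same route as the paper: the same induction on $i$ along the defining triangles, the same long exact sequence in cohomology, and the same key use of the regularity of $f_1,\ldots,f_n$ to conclude that multiplication by $f_i$ is injective on $R_\mathfrak p/(f_1,\ldots,f_{i-1})$, breaking the long exact sequence into short exact sequences. The degree-bookkeeping caveat you flag is harmless and the paper handles it implicitly in the same way.
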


\begin{proof}
Write $C_0=\unit_\mathfrak p$ and $C_i:= \unit_\mathfrak p\koszul (f_1,\ldots,f_{i})$ for short.
Then $K(\mathfrak p)= C_n$, and for all $i\in \{1,\ldots, n\}$ we have exact triangles
 \begin{equation*} 
\xymatrix{
\Sigma^{-|f_i|} C_{i-1} \ar[r]^-{f_i \cdot C_{i-1}} &  C_{i-1} \ar[r] & C_i \ar[r] & \Sigma^{-|f_i|+1} C_{i-1} \,.}
\end{equation*}
The claim follows by recursion on~$i$. Indeed $H^* C_0 = R_\mathfrak p$, and assume that $H^*C_{i-1}\simeq R_\mathfrak p/(f_1,\ldots,f_{i-1})$. Then the above triangle induces an exact sequence 
\[
\xymatrix{
 H^{*-|f_i|}C_{i-1} \ar[r]^-{f_i} &
  H^{*}C_{i-1} \ar[r] &
   H^*C_i \ar[r] &
    H^{*-|f_i|+1} C_{i-1} \ar[r]^-{f_i} &
     H^{*+1} C_{i-1} }
\]
where the first and last maps are injective because by hypothesis $f_i$ is a nonzero divisor in the ring $R_\mathfrak p/(f_1,\ldots,f_{i-1})$.
We thus obtain a short exact sequence 
$0\to f_i  R_\mathfrak p/(f_1,\ldots, f_{i-1})\to R_\mathfrak p/(f_1,\ldots, f_{i-1}) \to H^*C_i\to 0$, 
proving the claim for~$i$.
\end{proof}

\begin{remark} \label{rem:hyp}
Of the weak regularity hypothesis~\eqref{it:regular}, the proof of Lemma~\ref{lemma:koszul_quotient} only uses that $f_1,\ldots,f_n$ is a regular sequence, while the proof of Lemma~\ref{lemma:zero_action} only uses that the $f_i$ are non-zero-divisors in~$R_\mathfrak p$ and that the  ring~$R$ is even. These are the only places where we make use of these assumptions (the noetherian hypothesis, on the other hand, will be needed on several occasions).
Note that, although we already know  by Corollary~\ref{cor:vector_space} that $H^*\residue{p}$ is a $k(\mathfrak p)$-vector space, for the next proposition we also need it to be one-dimensional as per Lemma~\ref{lemma:koszul_quotient}. 
\end{remark}

\begin{prop} \label{prop:decomposition}
For all $\mathfrak p\in \Spec R$ and $X\in \mathcal K_\mathfrak p$, the tensor product $X\otimes K(\mathfrak p)$ decomposes into a coproduct of shifted copies of the residue field object: 
\[\coprod_{\alpha} \Sigma^{n_\alpha} \residue{p} \stackrel{\sim}{\longrightarrow} X\otimes \residue{p}\,.\]
\end{prop}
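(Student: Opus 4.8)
The plan is to show that $X \otimes \residue{p}$ is a coproduct (actually a finite one, since we are in the essentially small category $\mathcal{K}_{\mathfrak p}$, but the statement allows arbitrary coproducts) of shifted copies of $\residue{p}$ by exhibiting a splitting of the ``evaluation''-type map. The key structural input is Corollary~\ref{cor:vector_space}: the graded $R_{\mathfrak p}$-module $H^*(X\otimes \residue{p})$ is annihilated by $\mathfrak p R_{\mathfrak p}$, hence is a graded vector space over the graded residue field $k(\mathfrak p)$. Choose a homogeneous $k(\mathfrak p)$-basis $\{\bar x_\alpha\}$ of $H^*(X\otimes\residue{p})$, say with $\bar x_\alpha$ in degree $n_\alpha$; by Lemma~\ref{lemma:koszul_quotient} we have $H^*\residue{p}\cong k(\mathfrak p)$ with generator $\iota$ in degree $0$, so each basis element is represented by a morphism $x_\alpha \colon \Sigma^{n_\alpha}\residue{p} \to X\otimes\residue{p}$ with $H^*(x_\alpha)$ sending (a shift of) $\iota$ to $\bar x_\alpha$.

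First I would assemble these into a single map $\phi\colon \coprod_\alpha \Sigma^{n_\alpha}\residue{p} \to X\otimes\residue{p}$. The claim is that $H^*(\phi)$ is an isomorphism of $R_{\mathfrak p}$-modules. On the source, $H^*(\coprod_\alpha \Sigma^{n_\alpha}\residue{p}) \cong \bigoplus_\alpha \Sigma^{n_\alpha} k(\mathfrak p)$ (cohomology commutes with coproducts since $\unit_{\mathfrak p}$ is ``small'' relative to the finite coproducts actually occurring, or one reduces to finite coproducts first); on the target we have the chosen $k(\mathfrak p)$-basis; and by construction $H^*(\phi)$ is precisely the map sending the standard basis of the source to the basis $\{\bar x_\alpha\}$ of the target. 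Hence $H^*(\phi)$ is bijective. Since $\mathcal{K}_{\mathfrak p}$ is generated by $\unit_{\mathfrak p}$ as a thick (indeed localizing, but here thick suffices since everything in sight is built from finitely many $\residue{p}$'s, which are compact) subcategory, a morphism inducing an isomorphism on $H^* = \Hom^*_{\mathcal{K}_{\mathfrak p}}(\unit_{\mathfrak p}, -)$ is an isomorphism: the cone of $\phi$ has vanishing cohomology, hence lies in the kernel of the ``homology'' functor on $\Thick(\unit_{\mathfrak p}) = \mathcal{K}_{\mathfrak p}$, which by a standard argument forces it to be zero.

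The one point requiring a little care — and the step I expect to be the main obstacle — is making sure the coproduct is legitimate inside $\mathcal{K}_{\mathfrak p}$, which is only triangulated, not cocomplete. The honest fix is to observe that only \emph{finite} coproducts are needed when $X$ is a genuine object of the essentially small category: indeed $H^*(X\otimes\residue{p})$ is a finitely generated $R_{\mathfrak p}$-module (Lemma~\ref{lemma:supports}, since $X\otimes\residue{p}\in\mathcal{K}_{\mathfrak p}$ and $\mathcal{K}_{\mathfrak p}$ is again affine with noetherian central ring $R_{\mathfrak p}$), hence finite-dimensional over $k(\mathfrak p)$, so the index set $\{\alpha\}$ is finite and $\coprod_\alpha \Sigma^{n_\alpha}\residue{p}$ is just an iterated cone, an honest object of $\mathcal{K}_{\mathfrak p}$. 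The statement is then phrased with a general coproduct only for uniformity with the compactly generated setting of Theorem~\ref{thm:main_localizing}, where the same argument runs verbatim in $\mathcal{T}_{\mathfrak p}$ with honest (possibly infinite) coproducts and with $\Loc(\unit_{\mathfrak p}) = \mathcal{T}_{\mathfrak p}$ in place of $\Thick$. A secondary point to check is that $\Hom^*_{\mathcal{K}_{\mathfrak p}}(\unit_{\mathfrak p}, \Sigma^{n_\alpha}\residue{p}) = \Sigma^{n_\alpha} k(\mathfrak p)$, i.e.\ that shifting by $n_\alpha$ does what it should on cohomology, which is immediate from Lemma~\ref{lemma:koszul_quotient} and the definition of the $R$-module structure on $\Hom^*$.
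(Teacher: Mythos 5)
There is a genuine gap, and it sits exactly at the step you wave through in one line. An element $\bar x_\alpha \in H^{n_\alpha}(X\otimes \residue{p})$ is, by definition of $H^*$, a morphism $\Sigma^{n_\alpha}\unit_\mathfrak p\to X\otimes\residue{p}$, \emph{not} a morphism $\Sigma^{n_\alpha}\residue{p}\to X\otimes\residue{p}$. Your claim that ``each basis element is represented by a morphism $x_\alpha\colon\Sigma^{n_\alpha}\residue{p}\to X\otimes\residue{p}$ with $H^*(x_\alpha)(\iota)=\bar x_\alpha$'' is precisely what has to be proved, and it does not follow formally from $H^*(X\otimes\residue{p})$ being a $k(\mathfrak p)$-vector space or from $H^*\residue{p}\cong k(\mathfrak p)$: the functor $H^*$ is not $k(\mathfrak p)$-linear, so knowing the cohomology carries a $k(\mathfrak p)$-structure does not automatically let you realize $k(\mathfrak p)$-linear maps by morphisms out of $\residue{p}$. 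Concretely, you must show that the canonical map $\Hom^*_{\mathcal K_\mathfrak p}(\residue{p}, X\otimes\residue{p})\to \Hom^*_{\mathcal K_\mathfrak p}(\unit_\mathfrak p, X\otimes\residue{p})$, given by precomposition with $\unit_\mathfrak p\to\residue{p}$, is surjective. That is the heart of the paper's proof: one extends $x_\alpha$ step by step through the cofiber triangles $\Sigma^{-|f_i|}\unit_\mathfrak p\koszul(f_1,\ldots,f_{i-1})\xrightarrow{f_i}\unit_\mathfrak p\koszul(f_1,\ldots,f_{i-1})\to\unit_\mathfrak p\koszul(f_1,\ldots,f_i)$, and each extension exists because $f_i$ acts as zero on $X\otimes\residue{p}$ (Lemma~\ref{lemma:zero_action}), so the relevant composite vanishes. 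This is also the only place where the evenness of $R$ and the non-zero-divisor hypothesis actually enter, via Lemma~\ref{lemma:zero_action}; an argument that bypasses it entirely should be viewed with suspicion.

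Once those extensions $x_\alpha^n\colon\Sigma^{n_\alpha}\residue{p}\to X\otimes\residue{p}$ are in hand, your remaining steps are fine and essentially the paper's: $H^*(\phi)$ sends the standard basis to the chosen basis, hence is an isomorphism, and since $\mathcal K_\mathfrak p=\Thick(\unit_\mathfrak p)$ a map inducing an isomorphism on $H^*$ has zero cone and is an isomorphism. Your aside about the coproduct being finite (via Lemma~\ref{lemma:supports}) is correct, but it is not the obstacle; the extension problem is. You should redirect the ``little care'' there and carry out the recursion along the regular sequence, invoking Lemma~\ref{lemma:zero_action} at each stage.
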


\begin{proof}
By Corollary~\ref{cor:vector_space} we know that $H^*(X\otimes \residue{p})$ is a graded $k(\mathfrak p)$-vector space. 
Choose a graded basis  $\{x_\alpha\}_\alpha$, corresponding to a morphism 
$\coprod_\alpha \Sigma^{n_{\alpha}} \unit_\mathfrak p \to X\otimes \residue{p}$.
We will show that this map extends nontrivially to the Koszul object
\[
(\coprod_\alpha \Sigma^{n_\alpha} \unit_\mathfrak p)\koszul (f_1,\ldots,f_n) = \coprod_\alpha (\Sigma^{n_\alpha} \unit_\mathfrak p \koszul (f_1,\ldots,f_n)) \,.
\] 
For this, it will suffice to extend each individual map $x_\alpha\colon \Sigma^{n_\alpha}\unit_\mathfrak p \to X\otimes \residue{p}$.
As before, we proceed recursively along the regular sequence $f_1,\ldots,f_n$. 
Consider the following commutative diagram
\[
\xymatrix{
\Sigma^{n_\alpha - |f_1|} \unit_\mathfrak p \ar[d]_{\Sigma^{-|f_1|}x_\alpha} \ar[r]^-{f_1} \ar[dr]_0 &
 \Sigma^{n_\alpha} \unit_\mathfrak p \ar[r] \ar[d]^{x_\alpha} &
  \Sigma^{n_\alpha}\unit_\mathfrak p \koszul f_1 \ar[r] \ar@{..>}[dl]^{x_\alpha^1} & \\
\Sigma^{-|f_1|} X\otimes \residue{p} \ar[r]_-{f_1 =0 } & X\otimes \residue{p} &&
}
\]
where the top row is a rotation of the defining triangle for $\unit_\mathfrak p\koszul f_1$. The left-bottom composite vanishes because $f_1$ acts trivially on $X\otimes \residue{p}$ by Lemma~\ref{lemma:zero_action}. 
Hence we obtain the map $x_\alpha^1$ on the right.
Note that $x_\alpha^1\neq 0$ because $x_\alpha\neq0$.
Now we repeat the procedure for $i=2,\ldots,n$, using the triangle 
\[
\xymatrix{
\Sigma^{-|f_i|}  \unit_\mathfrak p \koszul (f_1,\ldots,f_{i-1}) \ar[r]^-{f_i} &
  \unit_\mathfrak p \koszul (f_1,\ldots,f_{i-1}) \ar[r] &
    \unit_\mathfrak p \koszul (f_1,\ldots,f_{i}) \ar[r] &
}
\]
 in order to extend $x_\alpha^{i-1}$ to a nonzero map 
 $x_\alpha^i\colon \Sigma^{n_\alpha} \unit_\mathfrak p \koszul (f_1,\ldots,f_i) \to X\otimes \residue{p} $ hitting the same element in cohomology.
 In particular we obtain the announced extension~$x_\alpha^n\colon \Sigma^{n_\alpha} K(\mathfrak p)\to X\otimes K(\mathfrak p)$. 
 As a nonzero map on a one-dimensional $k(\mathfrak p)$-vector space (Lemma~\ref{lemma:koszul_quotient}), the induced map $H^*(x^n_\alpha)$ must be injective. Hence, collectively, the maps $\{x^n_\alpha\}_\alpha$ yield an isomorphism as required.
\end{proof}

\begin{prop} \label{prop:minimal}
For every $\mathfrak p$, the thick subcategory $\Thick(\residue{p})$ of~$\mathcal K_\mathfrak p$ is \emph{minimal}, meaning that it contains no proper nonzero thick subcategories.
\end{prop}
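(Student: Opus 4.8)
\emph{Proof plan.} The plan is to show that every nonzero thick subcategory $\mathcal C$ of $\Thick(K(\mathfrak p))$ already contains a shifted copy of the residue field object $K(\mathfrak p)$; since $K(\mathfrak p)$ obviously generates $\Thick(K(\mathfrak p))$ as a thick subcategory, this immediately gives $\Thick(K(\mathfrak p))\subseteq \mathcal C$ and hence $\mathcal C=\Thick(K(\mathfrak p))$, which is exactly minimality. So I would start by choosing an arbitrary nonzero object $X\in\mathcal C$. Since $\mathcal K_{\mathfrak p}$ is generated by its tensor unit $\unit_{\mathfrak p}$ (as noted in the Recollections), every thick subcategory of $\mathcal K_{\mathfrak p}$ is automatically a tensor ideal; in particular $\mathcal C$, being thick in $\mathcal K_{\mathfrak p}$, is a tensor ideal, so the object $X\otimes K(\mathfrak p)$ also lies in $\mathcal C$.

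Next I would observe that $X\otimes K(\mathfrak p)$ is nonzero. Indeed, $X\otimes K(\mathfrak p)\simeq X\koszul(f_1,\ldots,f_n)$ by construction, so if it vanished then Lemma~\ref{lemma:ttNAK} (the triangulated Nakayama lemma) would force $X=0$, contrary to our choice. Now apply Proposition~\ref{prop:decomposition} to get an isomorphism $X\otimes K(\mathfrak p)\simeq \coprod_{\alpha}\Sigma^{n_\alpha}K(\mathfrak p)$. This coproduct is finite: the graded $R_{\mathfrak p}$-module $H^*(X\otimes K(\mathfrak p))$ is finitely generated by Lemma~\ref{lemma:supports} and is annihilated by $\mathfrak p R_{\mathfrak p}$ by Corollary~\ref{cor:vector_space}, hence it is a finite-dimensional $k(\mathfrak p)$-vector space (using $H^*K(\mathfrak p)\simeq k(\mathfrak p)$ from Lemma~\ref{lemma:koszul_quotient} to compare dimensions), so the index set of the decomposition is finite; and by the previous sentence it is nonempty. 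Since a thick subcategory is closed under retracts, each summand $\Sigma^{n_\alpha}K(\mathfrak p)$ — and therefore $K(\mathfrak p)$ itself, after desuspension — belongs to $\mathcal C$.

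Having $K(\mathfrak p)\in\mathcal C$, we conclude $\Thick(K(\mathfrak p))\subseteq\mathcal C\subseteq\Thick(K(\mathfrak p))$, so $\mathcal C=\Thick(K(\mathfrak p))$, as required. The only point requiring a little care is the finiteness of the coproduct in Proposition~\ref{prop:decomposition} — that is what makes it legitimate to split off a single shifted copy of $K(\mathfrak p)$ as a direct summand inside the essentially small category $\mathcal K_{\mathfrak p}$; once that is in hand, the argument is just a direct assembly of the preceding lemmas and no further obstacle arises.
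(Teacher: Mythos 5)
Your proof is correct and follows essentially the same route as the paper's: show that a nonzero $X\in\mathcal C$ has $X\otimes\residue{p}\neq 0$ via Lemma~\ref{lemma:ttNAK}, use the tensor-ideal property to place $X\otimes\residue{p}$ in $\mathcal C$, and invoke Proposition~\ref{prop:decomposition} plus closure under summands to recover $\residue{p}\in\mathcal C$. The only difference is that you explicitly verify the finiteness of the coproduct (via Lemma~\ref{lemma:supports} and Corollary~\ref{cor:vector_space}), a point the paper leaves implicit since the decomposition lives in the essentially small category $\mathcal K_{\mathfrak p}$; this is a reasonable bit of extra care, not a divergence in method.
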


\begin{proof}
Note that for every nonzero object $X$ of $\mathcal K_\mathfrak p$ we have $X\otimes \residue{p}\neq 0$. 
Indeed if $X\otimes \residue{p}= X \koszul (f_1,\ldots, f_n)= 0$ then $X_\mathfrak p= 0$ by Lemma~\ref{lemma:ttNAK}.

Let $\mathcal C$ be a thick subcategory of $ \Thick(\residue{p})$. Because $\mathcal C$ is a tensor ideal, if it contains a nonzero object~$X$ then it also contains $X \otimes \residue{p}$, which is again nonzero by the above observation.
Therefore $\mathcal C$ must contain a shifted copy of $\residue{p}$ by Proposition~\ref{prop:decomposition}, hence $\mathcal C=\Thick (\residue{p})$. This proves the claim.
\end{proof}

\subsection*{Proof of Theorem~\ref{thm:main_spectrum}}
Now we show how to deduce our main result from the minimality of the thick subcategories $\Thick(\residue{p})$.
By Lemma~\ref{lemma:bij_homeo} it will suffice to show that the map $\rho_\mathcal K\colon \Spc \mathcal K\to \Spec R$ is bijective.
Since $R$ is graded noetherian, $\rho_\mathcal K$ is surjective by \cite[Theorem~7.3]{Balmer10b}. It remains to prove it is injective.

Let $\mathfrak p\in \Spec R$ be any homogeneous prime. We must show that the fiber of the comparison map $\rho_\mathcal K\colon \Spc \mathcal K\to \Spec R$ over $\mathfrak p$ consists of a single prime tensor ideal. 
By the pullback square \eqref{eq:pullback_spectra}, every point of $\Spc \mathcal K$ lying over $\mathfrak p$ must belong to $\Spc \mathcal K_\mathfrak p$. Hence it will suffice to show that the fiber of $\rho:=\rho_{\mathcal K_{\mathfrak p}}$ over the maximal ideal $\mathfrak m:= \mathfrak p R_{\mathfrak p}$ of $R_\mathfrak p$ consists of a single point. 
In fact a stronger statement is true:
if $\mathcal P\in \Spc \mathcal K_\mathfrak p$ is such that $\rho(\mathcal P)= \mathfrak m$, then $\mathcal P= \{0\}$.
Let us prove this.  

By definition of the comparison map we have 
\[
\rho(\mathcal P) = \langle \{ f\in R_\mathfrak p  \mid f \textrm{ is homogeneous and } \unit_\mathfrak p\koszul f \not\in \mathcal P \} \rangle\,,
\]
and as $\rho(\mathcal P)\subseteq \mathfrak m$ always holds by the maximality of~$\mathfrak m$, the hypothesis $\rho(\mathcal P) = \mathfrak m$ precisely means that 
$\unit_\mathfrak p\koszul f \not\in \mathcal P$ for all homogeneous elements $f \in \mathfrak m$.
In particular $\unit_\mathfrak p\koszul f_i \not\in \mathcal P$ for the elements $f_i$ in the chosen regular sequence for~$\mathfrak m$. 
As $\mathcal P$ is a tensor prime, we deduce further that 
\begin{equation} \label{eq:notinP}
K(\mathfrak p) \simeq  \unit_\mathfrak p\koszul f_1 \otimes \ldots \otimes \unit_\mathfrak p\koszul f_n \not\in \mathcal P \,.
\end{equation}
Now let $X\in \mathcal P$ and assume that $X\neq 0$. 
Then $X\otimes K(\mathfrak p)\neq 0$ by Lemma~\ref{lemma:ttNAK}, hence 
\begin{equation} \label{eq:equalthicks}
\Thick(X \otimes K(\mathfrak p)) = \Thick(K(\mathfrak p)) 
\end{equation}
by the minimality of $\Thick(K(\mathfrak p))$, Proposition~\ref{prop:minimal}.
As $\mathcal P$ is a thick tensor ideal we also have $X\otimes K(\mathfrak p) \in \mathcal P$ and therefore $K(\mathfrak p) \in \mathcal P$ by~\eqref{eq:equalthicks}, but this contradicts~\eqref{eq:notinP}. 
Therefore $X=0$ and we conclude that $\mathcal P=\{0\}$, proving the claim.

This concludes the proof of Theorem~\ref{thm:main_spectrum}.

\subsection*{Proof of Corollary~\ref{cor:main_thick}}
In order to deduce Corollary~\ref{cor:main_thick} from the theorem, we must verify that the homeomorphism $\rho_\mathcal K$ identifies $\SuppR H^*X \subseteq \Spec R$, the ring-theoretic support of an object $X\in \mathcal K$, with $\supp X:= \{\mathcal P \in \Spc \mathcal K \mid X\not\in \mathcal P\}$, the universal support datum of~$X$:

\begin{lemma} \label{lemma:supps_K}
We have $\SuppR H^* X = \rho_\mathcal K(\supp X)$ for all $X\in \mathcal K$.
\end{lemma}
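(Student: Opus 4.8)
The plan is to prove the set equality $\SuppR H^*X = \rho_{\mathcal K}(\supp X)$ by a pointwise argument, checking for each homogeneous prime $\mathfrak p\in \Spec R$ that $\mathfrak p\in \SuppR H^*X$ if and only if $\mathfrak p = \rho_{\mathcal K}(\mathcal P)$ for some $\mathcal P\in \Spc\mathcal K$ with $X\notin\mathcal P$. By the pullback square~\eqref{eq:pullback_spectra}, the primes in $\rho_{\mathcal K}(\supp X)$ lying over a given $\mathfrak p$ are controlled by the local category $\mathcal K_{\mathfrak p}$: the fiber of $\rho_{\mathcal K}$ over $\mathfrak p$ is identified with the fiber of $\rho_{\mathcal K_{\mathfrak p}}$ over the maximal ideal $\mathfrak m = \mathfrak p R_{\mathfrak p}$, which by the proof of Theorem~\ref{thm:main_spectrum} is a single point $\mathcal P_{\mathfrak p}=\{0\}\in\Spc\mathcal K_{\mathfrak p}$ provided $\mathcal K_{\mathfrak p}\neq 0$ (and is empty otherwise). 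So the statement reduces to: $\mathfrak p\in\SuppR H^*X$ iff $X_{\mathfrak p}\neq 0$ in $\mathcal K_{\mathfrak p}$ iff $X\notin\mathcal P_{\mathfrak p}$, i.e.\ the unique point over $\mathfrak p$ is actually hit.

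First I would recall that $\SuppR H^*X = \{\mathfrak p \mid (H^*X)_{\mathfrak p}\neq 0\}$, and that by the central localization construction $(H^*X)_{\mathfrak p} = \Hom^*_{\mathcal K_{\mathfrak p}}(\unit_{\mathfrak p}, X_{\mathfrak p}) = H^*(X_{\mathfrak p})$ computed inside $\mathcal K_{\mathfrak p}$. Next, since $\mathcal K_{\mathfrak p}$ is generated by its unit $\unit_{\mathfrak p}$ (as $\mathcal K=\Thick(\unit)$), an object $X_{\mathfrak p}$ is zero if and only if $H^*(X_{\mathfrak p})=0$. Combining these two observations gives the clean equivalence
\[
\mathfrak p\in\SuppR H^*X \iff X_{\mathfrak p}\neq 0 \text{ in }\mathcal K_{\mathfrak p}.
\]
Then I would unwind the right-hand side: $\mathfrak p\in\rho_{\mathcal K}(\supp X)$ means there is $\mathcal P\in\Spc\mathcal K$ with $\rho_{\mathcal K}(\mathcal P)=\mathfrak p$ and $X\notin\mathcal P$; by the pullback square such $\mathcal P$ lies in the image of $\Spc(q_{\mathfrak p})$, i.e.\ corresponds to a prime $\mathcal P'\in\Spc\mathcal K_{\mathfrak p}$ with $\rho_{\mathcal K_{\mathfrak p}}(\mathcal P')=\mathfrak m$, and $X\notin\mathcal P$ translates (via the Verdier quotient $q_{\mathfrak p}$, whose kernel is contained in every such $\mathcal P$) to $X_{\mathfrak p}\notin\mathcal P'$. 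By the strengthened injectivity statement established in the proof of Theorem~\ref{thm:main_spectrum}, the only candidate is $\mathcal P'=\{0\}$, which exists in $\Spc\mathcal K_{\mathfrak p}$ exactly when $\mathcal K_{\mathfrak p}\neq 0$, and then $X_{\mathfrak p}\notin\{0\}$ simply says $X_{\mathfrak p}\neq 0$.

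Assembling the two chains of equivalences yields $\mathfrak p\in\SuppR H^*X \iff \mathfrak p\in\rho_{\mathcal K}(\supp X)$, which is the desired equality of subsets of $\Spec R$. The main obstacle I anticipate is the careful bookkeeping around the pullback square and the Verdier quotient $q_{\mathfrak p}\colon\mathcal K\to\mathcal K_{\mathfrak p}$: one must make sure that ``$X\notin\mathcal P$ in $\mathcal K$'' corresponds precisely to ``$X_{\mathfrak p}\notin\mathcal P'$ in $\mathcal K_{\mathfrak p}$'' for the point $\mathcal P$ lying over $\mathfrak p$, using that the kernel of $q_{\mathfrak p}$ is the thick tensor ideal generated by the cones $\cone(f)$ with $f\notin\mathfrak p$, all of which lie in any prime $\mathcal P$ with $\rho_{\mathcal K}(\mathcal P)=\mathfrak p$ (since $\cone(f)\notin\mathcal P$ would force $f\in\rho_{\mathcal K}(\mathcal P)=\mathfrak p$). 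Once this identification is in place, everything else follows formally from results already proved. It is also worth noting explicitly that when $\mathcal K_{\mathfrak p}=0$ both sides are manifestly empty (no prime lies over $\mathfrak p$, and $X_{\mathfrak p}=0$), so that degenerate case causes no trouble.
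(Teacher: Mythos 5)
Your proof is correct and follows essentially the same route as the paper's: reduce to the local category $\mathcal K_\mathfrak p$ via the pullback square, use the identifications $(H^*X)_\mathfrak p \cong H^*(X_\mathfrak p)$ and ``$X_\mathfrak p=0 \Leftrightarrow H^*(X_\mathfrak p)=0$'' (the latter from $\mathcal K_\mathfrak p = \Thick(\unit_\mathfrak p)$), and invoke the fact, established in the proof of Theorem~\ref{thm:main_spectrum}, that the unique prime of $\Spc\mathcal K_\mathfrak p$ over $\mathfrak m=\mathfrak p R_\mathfrak p$ is~$\{0\}$. The only cosmetic difference is that you quantify over $\mathfrak p\in\Spec R$ and worry about the empty fiber, whereas the paper quantifies over $\mathcal P\in\Spc\mathcal K$ and sets $\mathfrak p=\rho_\mathcal K(\mathcal P)$; since $\rho_\mathcal K$ is already known to be bijective and $\unit_\mathfrak p\neq 0$ (as $H^*\unit_\mathfrak p = R_\mathfrak p\neq 0$), the degenerate case $\mathcal K_\mathfrak p=0$ never actually occurs.
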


\begin{proof}
Let $\mathfrak p=\rho_\mathcal K(\mathcal P)$. 
It follows from \eqref{eq:pullback_spectra} that $X\in \mathcal P$ iff $X_\mathfrak p\in \mathcal P_\mathfrak p$, where $\mathcal P_\mathfrak p$ denotes $\mathcal P$ seen as an element of $\Spc \mathcal K_\mathfrak p$.
We have just proved that $\rho_{\mathcal K_\mathfrak p}\colon \Spc \mathcal K_\mathfrak p \stackrel{\sim}{\to}\Spec R_\mathfrak p$ is a bijection sending $\{0\}$ to~$\mathfrak p R_\mathfrak p$, so we must have $\mathcal P_\mathfrak p=\{0\}$.
Therefore $\mathfrak p\in \SuppR H^*X$ 
$\Leftrightarrow$ $H^*X_\mathfrak p\neq0$ 
$\Leftrightarrow$ $X_\mathfrak p\neq 0$
$\Leftrightarrow$ $X_\mathfrak p\not\in\mathcal P_\mathfrak p$
$\Leftrightarrow$ $\mathcal P\in \supp X$.
\end{proof}

Now it suffices to appeal to the abstract classification theorem \cite[Theorem~4.10]{Balmer05a}. Indeed, since $R$ is noetherian, the space $\Spec R$ is noetherian and therefore its specialization closed subsets and its Thomason subsets coincide (cf.\ \cite[Remark~4.11]{Balmer05a}). Moreover, since $\mathcal K$ is generated by its tensor unit, all its objects are dualizable (because dualizable objects form a thick subcategory and $\unit$ is dualizable) and therefore all its thick tensor ideals are radical (see \cite[Proposition~2.4]{Balmer07}).  Hence by Theorem~\ref{thm:main_spectrum} and Lemma~\ref{lemma:supps_K} the classification of \cite[Theorem~4.10]{Balmer05a} immediately translates into  the classification described in Corollary~\ref{cor:main_thick}, as wished.

\section{Localizing subcategories}
\label{sec:localizing}

Assume from now on that $\mathcal T$ is a compactly generated tensor triangulated category such that its subcategory $\mathcal K:=\mathcal T^c$ of compact objects satisfies hypotheses \eqref{it:affine} and~\eqref{it:regular} from the statement of Theorem~\ref{thm:main_spectrum}.
Thus in particular $\mathcal T$ is generated as a localizing subcategory by the tensor unit: $\Loc(\unit)=\mathcal T$. If follows that every localizing subcategory of $\mathcal T$ is automatically a tensor ideal.

Since $\mathcal T$ is compactly generated, the (Verdier) $\mathfrak p$-localization functor
$q_{\mathfrak p}\colon \mathcal K\to \mathcal K_\mathfrak p$  we used so far can be extended to a finite (Bousfield) localization functor 
\[
(-)_\mathfrak p\colon \mathcal T\longrightarrow \mathcal T \,.
\]
We briefly recall its properties, referring to \cite[\S2]{BensonIyengarKrause11} or \cite[\S2]{DellAmbrogio10} for all proofs. 
Let $\mathcal L=\Loc( \{\cone(f) \mid f \in R\smallsetminus \mathfrak p \textrm{ homogeneous} \})$. 
Then the Verdier quotient  $Q\colon \mathcal T\to \mathcal T/\mathcal L=:\mathcal T_\mathfrak p$ has a fully faithful right adjoint, $I\colon \mathcal T_\mathfrak p\hookrightarrow \mathcal T$, and the functor $(-)_\mathfrak p$ can be defined to be the composite $(-)_\mathfrak p:=I\circ Q$.
As $\mathcal L$ is generated by a tensor ideal of dualizable objects, we have $X_\mathfrak p \cong X \otimes \unit_\mathfrak p$ for all $X\in \mathcal T$. 
Moreover, the unit $X\to X_\mathfrak p$ of the $(Q,I)$-adjunction induces a natural map 
 $
 \Hom^*_\mathcal T(Y,X)_\mathfrak p \to  \Hom^*_\mathcal T(Y,X_\mathfrak p)
 $
which is an isomorphism whenever $Y\in \mathcal K$ (see \cite[Proposition~2.3]{BensonIyengarKrause11} or \cite[Theorem 2.33\,(h)]{DellAmbrogio10}). In particular we have the identification
\[
(H^*X)_\mathfrak p\stackrel{\sim}{\to} H^*(X_\mathfrak p)
\]
for all~$X\in \mathcal T$.
It follows also that the restriction of $Q$ to compact objects $X,Y\in \mathcal K$ agrees with~$q_\mathfrak p$, so that we may identify $\mathcal K_\mathfrak p$ with the full subcategory $I(\mathcal K_\mathfrak p)$ of~$\mathcal T$ (and thereby eliminate the slight ambiguity of the notation ``$X_\mathfrak p$''). 

Recall the residue field objects $K(\mathfrak p)$ defined in the previous section:
\[
K(\mathfrak p) := \unit_\mathfrak p \koszul (f_1,\ldots, f_n) \simeq \unit_\mathfrak p\koszul f_1 \otimes \ldots \otimes \unit_\mathfrak p\koszul f_n \quad \in \quad \mathcal T
\]
(as before, $f_1,\ldots,f_n$ denotes the chosen regular sequence of non-zero-divisors generating the prime~$\mathfrak p$).

The main point of this section is that the crucial minimality result of Proposition~\ref{prop:minimal} can be extended to localizing subcategories of~$\mathcal T$, as we verify next.

\begin{lemma} \label{lemma:zero_action_T}
For every object $X\in \mathcal T$ and every~$i\in \{1,\ldots,n\}$, each element $f$ of  $(f_1,\ldots,f_i)\subset R$ acts as zero on $X_\mathfrak p \koszul (f_1,\ldots,f_i)$, i.e., $f\cdot {X_\mathfrak p \koszul (f_1,\ldots,f_i)}=0$.
In particular, the $R$-module $H^*(X \otimes \residue{p})$ is a graded $k(\mathfrak p)$-vector space.
\end{lemma}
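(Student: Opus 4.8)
The plan is to reduce this statement to its already-established local counterpart, Lemma~\ref{lemma:zero_action}, by transporting the action-annihilation assertion across the finite localization functor $(-)_\mathfrak p$ and then promoting the $k(\mathfrak p)$-vector space assertion exactly as in Corollary~\ref{cor:vector_space}. The key observation is that the object $X_\mathfrak p$ belongs to the (Bousfield-localized) subcategory $\mathcal T_\mathfrak p$, which contains $\mathcal K_\mathfrak p = I(\mathcal K_\mathfrak p)$ but is typically larger; so we cannot literally cite Lemma~\ref{lemma:zero_action} verbatim, but its proof goes through unchanged. So first I would recall, as in the proof of Lemma~\ref{lemma:zero_action}, that the elements of $R$ (equivalently of $R_\mathfrak p$, since we are inside $\mathcal T_\mathfrak p$) which act as zero on a given object $Y$ form an ideal, namely $\mathrm{Ann}_R \Hom^*_{\mathcal T}(Y,Y)$, and that the Koszul construction is compatible with tensoring: $X_\mathfrak p \koszul (f_1,\ldots,f_i) \simeq X_\mathfrak p \otimes \unit_\mathfrak p\koszul f_1 \otimes \cdots \otimes \unit_\mathfrak p\koszul f_i$, using exactness of $\otimes$ in $\mathcal T$. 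By $R$-linearity of $\otimes$ it therefore suffices to show that each $f_i$ acts as zero on $\unit_\mathfrak p\koszul f_i \in \mathcal T$.

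Second, I would establish this last fact by the very same octahedral/diagram argument used in Lemma~\ref{lemma:zero_action}: the composite of two consecutive maps in the defining triangle of $\unit_\mathfrak p\koszul f_i$ vanishes, so $f_i \cdot \unit_\mathfrak p\koszul f_i$ factors through the connecting map, i.e.\ through a map $h\colon \Sigma^{-|f_i|+1}\unit_\mathfrak p \to \Sigma^{|f_i|}\unit_\mathfrak p\koszul f_i$; and $h$ lands in an odd cohomological degree of $\unit_\mathfrak p\koszul f_i$, which vanishes because $R$ is even and $f_i$ is a non-zero-divisor in $R_\mathfrak p$ — indeed the defining triangle gives the exact sequence computing $H^*(\unit_\mathfrak p\koszul f_i)\simeq R_\mathfrak p/(f_i)$ exactly as before. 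Here the relevant cohomology is $H^* = \Hom^*_{\mathcal T}(\unit, -)$ applied to objects of the form $\unit_\mathfrak p\koszul f_i$; the identification $(H^*\unit_\mathfrak p)_{\mathfrak p'} \cong$ the corresponding local Hom module, and in particular $H^*\unit_\mathfrak p \cong R_\mathfrak p$ (which is what makes the exact sequence have the same shape), follows from the recalled property that $\Hom^*_\mathcal T(Y,X)_\mathfrak p \stackrel{\sim}{\to}\Hom^*_\mathcal T(Y,X_\mathfrak p)$ for $Y$ compact, applied with $Y=\unit$, $X=\unit$. So $h=0$ and $f_i\cdot \unit_\mathfrak p\koszul f_i = 0$, as needed.

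Third, for the "in particular" clause: applying what has just been proved with $i=n$ shows that every $f\in(f_1,\ldots,f_n)R = \mathfrak p R_\mathfrak p$ acts as zero on $X_\mathfrak p\koszul(f_1,\ldots,f_n) \simeq X \otimes \residue{p}$ (using $X_\mathfrak p \cong X\otimes\unit_\mathfrak p$ and $R$-linearity of $\otimes$ again). Then by $R$-linearity of composition in $\mathcal T$, each such $f$ acts as zero on the $R$-module $H^*(X\otimes\residue{p}) = \Hom^*_\mathcal T(\unit, X\otimes\residue{p})$, so this module is annihilated by $\mathfrak p R_\mathfrak p$; it is also an $R_\mathfrak p$-module because $X\otimes\residue{p}\cong (X\otimes\residue{p})_\mathfrak p$ lies in $\mathcal T_\mathfrak p$ (it is $\unit_\mathfrak p$-local, being a tensor multiple of $\unit_\mathfrak p$), hence it is a module over $R_\mathfrak p/\mathfrak p R_\mathfrak p = k(\mathfrak p)$, i.e.\ a graded $k(\mathfrak p)$-vector space. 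This is verbatim the argument of Corollary~\ref{cor:vector_space}, now in $\mathcal T$ rather than $\mathcal K_\mathfrak p$.

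The only genuinely non-formal point — and hence the step I expect to need the most care — is making sure the cohomology computations that were done inside $\mathcal K_\mathfrak p$ (where $H^*\unit_\mathfrak p = R_\mathfrak p$ is built into the construction of $\mathcal K_\mathfrak p$) still hold when we interpret $H^*$ as $\Hom^*_\mathcal T(\unit,-)$; this is exactly what the isomorphism $(H^*X)_\mathfrak p \stackrel{\sim}{\to} H^*(X_\mathfrak p)$ recalled just before the lemma provides, so really the proof is just a transcription. Beyond that the statement is a routine "same proof, different ambient category" and can be dispatched by citing Lemma~\ref{lemma:zero_action} and Corollary~\ref{cor:vector_space} and noting that their proofs use only formal properties ($R$-bilinearity of composition and tensor, exactness of $\otimes$, evenness of $R$, the non-zero-divisor hypothesis) that hold equally in $\mathcal T$ for objects of the form $X_\mathfrak p\koszul(f_1,\ldots,f_i)$.
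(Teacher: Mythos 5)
Your proof is correct and takes essentially the same route as the paper, whose own proof is simply ``Exactly the same proof as for Lemma~\ref{lemma:zero_action} and Corollary~\ref{cor:vector_space},'' with the parenthetical hint to work inside the big $\mathfrak p$-local category $\mathcal T_\mathfrak p$; you have merely spelled out the details the paper leaves implicit. One small simplification you could have used: once you reduce to showing $f_i \cdot \unit_\mathfrak p\koszul f_i = 0$, you are making a statement about a single morphism between compact objects, so Lemma~\ref{lemma:zero_action} can in fact be cited verbatim (via the identification of $\mathcal K_\mathfrak p$ with a full subcategory of $\mathcal T$) rather than re-proved — the diagram chase need not be repeated.
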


\begin{proof}
Exactly the same proof as for Lemma~\ref{lemma:zero_action} and Corollary~\ref{cor:vector_space}.
(Use that $X \otimes K(\mathfrak p)=X_\mathfrak p \otimes K(\mathfrak p)$ to work inside the big $\mathfrak p$-local category $\mathcal T_\mathfrak p$.)
\end{proof}

\begin{prop} \label{prop:decomposition_T}
For all $\mathfrak p\in \Spec R$ and $X\in \mathcal T$, the tensor product $X\otimes K(\mathfrak p)$ decomposes into a coproduct of shifted copies of the residue field object: 
\[\coprod_{\alpha} \Sigma^{n_\alpha} \residue{p} \stackrel{\sim}{\longrightarrow} X\otimes \residue{p}\,.\]
\end{prop}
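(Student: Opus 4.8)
The plan is to mimic the proof of Proposition~\ref{prop:decomposition} as closely as possible, replacing the role of $\Thick$ by $\Loc$ and allowing arbitrary (set-indexed) coproducts, which are now available in $\mathcal T$ since it is compactly generated. By Lemma~\ref{lemma:zero_action_T} we know that $H^*(X\otimes K(\mathfrak p))$ is a graded $k(\mathfrak p)$-vector space, so I would first choose a graded $k(\mathfrak p)$-basis $\{x_\alpha\}_\alpha$ of $H^*(X\otimes K(\mathfrak p))$, now possibly an infinite set. Each basis element is a homogeneous cohomology class, i.e.\ a morphism $x_\alpha\colon \Sigma^{n_\alpha}\unit_\mathfrak p\to X\otimes K(\mathfrak p)$ in~$\mathcal T$; assembling them over the index set gives a single morphism $\coprod_\alpha \Sigma^{n_\alpha}\unit_\mathfrak p\to X\otimes K(\mathfrak p)$, where the coproduct exists in~$\mathcal T$.

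Next I would run the same recursive extension along the regular sequence $f_1,\ldots,f_n$ as in the proof of Proposition~\ref{prop:decomposition}. For each individual $x_\alpha$, the octahedral/rotation diagram from that proof extends $x_\alpha$ over $\Sigma^{n_\alpha}\unit_\mathfrak p\koszul f_1$ because $f_1$ acts as zero on $X\otimes K(\mathfrak p)$ by Lemma~\ref{lemma:zero_action_T}; iterating for $i=2,\ldots,n$ produces a nonzero map $x_\alpha^n\colon \Sigma^{n_\alpha}K(\mathfrak p)\to X\otimes K(\mathfrak p)$ hitting $x_\alpha$ in cohomology. Since coproducts commute with the Koszul construction — $(\coprod_\alpha \Sigma^{n_\alpha}\unit_\mathfrak p)\koszul(f_1,\ldots,f_n)\simeq\coprod_\alpha(\Sigma^{n_\alpha}\unit_\mathfrak p\koszul(f_1,\ldots,f_n))$ because $\otimes$ and hence $\cone$ preserve coproducts — these assemble to a morphism
\[
\varphi\colon \coprod_\alpha \Sigma^{n_\alpha}K(\mathfrak p)\longrightarrow X\otimes K(\mathfrak p)\,.
\]
Finally I would check that $\varphi$ is an isomorphism by checking it induces an isomorphism on $H^*$. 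Here I use that $H^*$ commutes with coproducts (as $\unit$ is compact in~$\mathcal T$), that $H^*K(\mathfrak p)\cong k(\mathfrak p)$ is one-dimensional by Lemma~\ref{lemma:koszul_quotient}, and that by construction $H^*(\varphi)$ sends the standard basis of $\bigoplus_\alpha \Sigma^{n_\alpha}k(\mathfrak p)$ to the chosen basis $\{x_\alpha\}$ of $H^*(X\otimes K(\mathfrak p))$, hence is bijective. Since $\mathcal T$ is generated by $\unit$, a morphism inducing an isomorphism on $H^*=\Hom^*_\mathcal T(\unit,-)$ is an isomorphism, so $\varphi$ is the desired decomposition.

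The main technical point to be careful about — the only place this differs from Proposition~\ref{prop:decomposition} in substance rather than bookkeeping — is the passage from individual extensions $x_\alpha\mapsto x_\alpha^n$ to a single coproduct map, and the verification that the resulting $H^*(\varphi)$ is surjective as well as injective. In the thick (finite-coproduct) case surjectivity onto a basis was automatic; here one must invoke the commutation of $H^*$ with the possibly-infinite coproduct, which is exactly the compactness of~$\unit$ in~$\mathcal T$ and the identification $(H^*Y)_\mathfrak p\xrightarrow{\sim}H^*(Y_\mathfrak p)$ recalled above. Everything else — the Koszul diagrams, the vanishing of the $f_i$-action, the one-dimensionality of $H^*K(\mathfrak p)$ — is supplied verbatim by the lemmas already proved.
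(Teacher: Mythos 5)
Your proof is correct and is exactly the argument the paper intends: the paper's own proof of Proposition~\ref{prop:decomposition_T} simply says ``Exactly the same as for Proposition~\ref{prop:decomposition}, using Lemma~\ref{lemma:zero_action_T},'' and your write-up supplies precisely the details that make this work in the compactly generated setting --- allowing possibly infinite coproducts, using compactness of $\unit$ to commute $H^*$ past them, and using $\mathcal T=\Loc(\unit)$ to conclude that a map inducing an isomorphism on $H^*$ is an isomorphism.
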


\begin{proof}
Exactly the same as for Proposition~\ref{prop:decomposition}, using Lemma~\ref{lemma:zero_action_T}.
\end{proof}

\begin{prop} \label{prop:minimal_T}
For every $\mathfrak p$, the localizing subcategory $\Loc(\residue{p})$ of~$\mathcal T$ is minimal, meaning that it contains no proper nonzero localizing subcategories.
\end{prop}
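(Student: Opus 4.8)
The plan is to mimic the proof of Proposition~\ref{prop:minimal}, replacing ``thick'' by ``localizing'' and ``$\Thick$'' by ``$\Loc$'' throughout, but being careful that the Nakayama-type argument used to show $X\otimes K(\mathfrak p)\neq 0$ for $X\neq 0$ is no longer available for arbitrary (non-compact) $X\in\mathcal T$. So the first step is to establish the replacement: \emph{for every nonzero $X\in\mathcal T$ we have $X\otimes K(\mathfrak p)\neq 0$ for some $\mathfrak p$}, or rather, for the minimality argument, that $\Loc(K(\mathfrak p))$ is a nonzero minimal localizing subcategory. Let me reorganize: to prove minimality of $\Loc(K(\mathfrak p))$ it suffices to show that every nonzero $X\in\Loc(K(\mathfrak p))$ generates all of $\Loc(K(\mathfrak p))$, i.e.\ $\Loc(X)=\Loc(K(\mathfrak p))$.

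First I would note that since $\mathcal T$ is generated by $\unit$ as a localizing subcategory, every localizing subcategory is a tensor ideal; in particular $\Loc(K(\mathfrak p))$ is a tensor ideal, so for $X\in\Loc(K(\mathfrak p))$ we have $X\otimes K(\mathfrak p)\in\Loc(X)$. Next, the key point: if $X\neq 0$ lies in $\Loc(K(\mathfrak p))$, then $X\otimes K(\mathfrak p)\neq 0$. Here I would invoke the fact that $K(\mathfrak p)$ is self-tensor-idempotent up to copies: by Proposition~\ref{prop:decomposition_T}, $K(\mathfrak p)\otimes K(\mathfrak p)$ is a coproduct of shifts of $K(\mathfrak p)$, and in fact (one checks via Lemma~\ref{lemma:koszul_quotient}, whose conclusion $H^*K(\mathfrak p)\cong k(\mathfrak p)$ carries over verbatim to $\mathcal T$) it is nonzero. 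More to the point, any nonzero $X\in\Loc(K(\mathfrak p))$ must have $X\otimes K(\mathfrak p)\neq 0$: if $X\otimes K(\mathfrak p)=0$ then the full subcategory $\{Y\in\mathcal T\mid Y\otimes X=0\}$ is a localizing subcategory (tensor preserves coproducts and triangles) containing $K(\mathfrak p)$, hence contains $\Loc(K(\mathfrak p))\ni X$, forcing $X\otimes X=0$; but then similarly $\{Y\mid Y\otimes X\otimes X=0\}\supseteq\Loc(X)$... this needs $X\in\Loc(X)$ which is trivial, giving $X=X\otimes\unit$, and I would instead argue directly: $\{Y\mid Y\otimes X=0\}$ is localizing and contains $K(\mathfrak p)$, so it contains $X$ itself (as $X\in\Loc(K(\mathfrak p))$), so $X\otimes X=0$; iterating is awkward, so better: it also contains $\unit$? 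No. The cleanest route is: $\{Y\in\mathcal T\mid X\otimes Y=0\}$ is a localizing subcategory containing $K(\mathfrak p)$ hence containing $X$, so $X\otimes X=0$; but $X\in\Loc(K(\mathfrak p))=\Loc(K(\mathfrak p)\otimes K(\mathfrak p))$ by Proposition~\ref{prop:decomposition_T}, and ... I think the honest fix is to use that $\Loc(K(\mathfrak p))$ consists of objects $Y$ with $Y\otimes K(\mathfrak p)\neq 0$ whenever $Y\neq 0$, proved by the same self-tensor-nilpotence dodge as above applied with $\unit_\mathfrak p$: since every object of $\Loc(K(\mathfrak p))$ is $\mathfrak p$-local (it lies in $\mathcal T_\mathfrak p$, as $K(\mathfrak p)$ is), we may work in $\mathcal T_\mathfrak p$ where $\unit_\mathfrak p$ generates, and $X\otimes K(\mathfrak p)=0$ with $X\neq 0$ would make $\{Y\in\mathcal T_\mathfrak p\mid X\otimes Y=0\}$ a localizing tensor-ideal containing $K(\mathfrak p)=\unit_\mathfrak p\koszul(f_1,\dots,f_n)$; but Lemma~\ref{lemma:ttNAK} shows a $\mathfrak p$-local object is zero iff its Koszul object is, and iterating this along a ``big'' Nakayama is precisely \cite[Lemma~5.11]{BensonIyengarKrause08}, so $X\otimes K(\mathfrak p)=0\Rightarrow X=0$ directly.

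With that in hand the argument closes exactly as in Proposition~\ref{prop:minimal}: let $\mathcal C\subseteq\Loc(K(\mathfrak p))$ be a nonzero localizing subcategory, pick $0\neq X\in\mathcal C$; since $\mathcal C$ is a tensor ideal, $X\otimes K(\mathfrak p)\in\mathcal C$, and $X\otimes K(\mathfrak p)\neq 0$ by the above. By Proposition~\ref{prop:decomposition_T}, $X\otimes K(\mathfrak p)$ is a nonzero coproduct of shifts of $K(\mathfrak p)$, so $K(\mathfrak p)$ is a retract of $X\otimes K(\mathfrak p)$ (pick out one nonzero summand; split idempotents are available in $\mathcal T$ since it has coproducts), hence $K(\mathfrak p)\in\mathcal C$ and therefore $\mathcal C=\Loc(K(\mathfrak p))$. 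This proves minimality.

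The main obstacle, and the only real content beyond bookkeeping, is the step ``$X\neq 0\Rightarrow X\otimes K(\mathfrak p)\neq 0$'' for non-compact $X$: the compact-case proof used the triangular Nakayama Lemma~\ref{lemma:ttNAK}, whose statement is about objects of $\mathcal K_\mathfrak p$, and one must either reprove it in the big category $\mathcal T_\mathfrak p$ (citing \cite[Lemma~5.11]{BensonIyengarKrause08}, which is stated there at that level of generality) or route around it as sketched above. Everything else — that localizing subcategories are tensor ideals, that objects of $\Loc(K(\mathfrak p))$ are $\mathfrak p$-local, that a nonzero coproduct of shifts of $K(\mathfrak p)$ retracts onto $K(\mathfrak p)$ — is routine and inherited from the finite case.
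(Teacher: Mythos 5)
Your overall structure is right and matches the paper: reduce minimality to showing that every nonzero $X\in\Loc(\residue{p})$ has $X\otimes\residue{p}\neq 0$, then conclude via Proposition~\ref{prop:decomposition_T}. You also correctly flag that Lemma~\ref{lemma:ttNAK} is unavailable for non-compact objects and that this is the only nontrivial point. But your fill for that gap doesn't hold up.

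The concrete problem is the step ``$X\otimes K(\mathfrak p)=0\Rightarrow X=0$ directly'' for $\mathfrak p$-local~$X$, which you propose to extract from a ``big'' version of Lemma~\ref{lemma:ttNAK} or from \cite[Lemma~5.11]{BensonIyengarKrause08}. This statement is simply false for non-compact objects: in $D(\mathbb Z)$ with $\mathfrak p=(p)$, the object $\mathbb Q$ is nonzero, is $\mathfrak p$-local, and satisfies $\mathbb Q\otimes\mathbb Z/p=0$. The proof of Lemma~\ref{lemma:ttNAK} runs through Nakayama applied to the finitely generated modules $H^*X_{i}$, and finite generation (Lemma~\ref{lemma:supports}) is precisely what you lose in~$\mathcal T$; BIK's Lemma~5.11 is likewise proved at the compact level and doesn't rescue you. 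The self-tensoring dodge, as you noticed yourself, only yields $X\otimes X=0$, which in a general compactly generated tensor triangulated category does not obviously force $X=0$.

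What you actually need to use is not just $\mathfrak p$-locality but membership in $\Loc(\residue{p})$, and the paper exploits this as follows. First, for any $X\in\Loc(\residue{p})$ one has $X\otimes\residue{q}=0$ for every $\mathfrak q\neq\mathfrak p$: this holds for $X=\residue{p}$ by Lemma~\ref{lemma:zero_action_T} (some homogeneous $f$ acts both as zero and invertibly on $\residue{p}\otimes\residue{q}$), and the property is closed under the colimits and triangles used to build $\Loc(\residue{p})$. So if additionally $X\otimes\residue{p}=0$, then $X\otimes\residue{q}=0$ for \emph{all} $\mathfrak q$. The paper then invokes a separate detection statement (Proposition~\ref{prop:detection}): an object killed by all residue objects is zero. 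Its proof is where the real replacement of Nakayama happens: using Lemma~\ref{lemma:min_supp} one picks a prime $\mathfrak q$ \emph{minimal} in $\SuppR H^*X$ (this requires noetherianness and finite Krull dimension of the graded local ring), and at a minimal prime the Koszul-triangle induction does go through even for infinitely generated modules, because if $X_i=0$ then $f_i$ would act invertibly and push the support below the chosen minimal prime, a contradiction. That minimal-prime argument is the missing idea in your proposal; without it, the implication you rely on is not available in the big category.
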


\begin{proof}
This follows from Proposition~\ref{prop:decomposition_T} precisely as in the proof of Proposition~\ref{prop:minimal}, except that we cannot use Lemma~\ref{lemma:ttNAK} to show that $X\otimes \residue{p}\neq 0$ for every nonzero object $X\in \Loc(\residue{p})$.
Instead, we may use the following argument. 

First note that $X \otimes \residue{q} =0$ for all  $\mathfrak q \in \Spec R \smallsetminus \{\mathfrak p\}$.
Indeed, this property holds for $X=\residue{p}$ by Lemma~\ref{lemma:zero_action_T} (because if $\mathfrak p\neq \mathfrak q$ then some homogeneous element of $R$ must act on $\residue{p}\otimes \residue{q}$ both as zero and invertibly) and is stable under taking coproducts and mapping cones (as the latter are preserved by $-\otimes \residue{p}$); hence it must hold for all objects of $\Loc(\residue{p})$, as wished.
Now combine this with Proposition~\ref{prop:detection} below.
\end{proof}

\begin{lemma} \label{lemma:min_supp}
Let $M$ be any nonzero module, possibly infinitely generated, over a noetherian $\mathbb Z$-graded commutative ring~$S$. 
Then there exists a minimal prime in $\mathrm{Supp}_S M : = \{ \mathfrak p \in \Spec S \mid M_\mathfrak p \neq 0 \}$, the big Zariski support of~$M$.
\end{lemma}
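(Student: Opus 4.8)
The plan is to reduce the statement about an arbitrary, possibly huge, graded module $M$ over a noetherian graded ring $S$ to a finitely generated situation, where the support is a closed set and minimal primes are available by ordinary commutative algebra. First I would pick any nonzero homogeneous element $m \in M$ and consider the cyclic submodule $Sm \cong S/\mathrm{Ann}_S(m)$; since $S$ is graded noetherian and $m$ is homogeneous, the annihilator is a homogeneous ideal, so $Sm$ is a nonzero finitely generated graded $S$-module. Its big support $\mathrm{Supp}_S(Sm)$ equals the Zariski closed set $V(\mathrm{Ann}_S m)$, which is nonempty, hence has a minimal element $\mathfrak p$ (every nonempty closed subset of the noetherian space $\Spec S$ has minimal points — equivalently, a chain of primes descending from any point must stabilize since $S$ is noetherian).

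The second step is to observe $\mathrm{Supp}_S(Sm) \subseteq \mathrm{Supp}_S M$: localization is exact, so $(Sm)_\mathfrak q$ is a submodule of $M_\mathfrak q$, and if the former is nonzero so is the latter. Thus $\mathfrak p \in \mathrm{Supp}_S M$. The third and final step is to promote minimality within the closed subset $V(\mathrm{Ann}_S m)$ to minimality within all of $\mathrm{Supp}_S M$. For this I would argue that $\mathfrak p$ is in fact minimal in the whole of $\Spec S$ among primes containing $\mathrm{Ann}_S m$ — that is the definition of minimality of a point in the closed set $V(\mathrm{Ann}_S m)$ — but that is not quite enough, because $\mathrm{Supp}_S M$ could contain primes not containing $\mathrm{Ann}_S m$ yet still contained in $\mathfrak p$. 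To handle this cleanly, I would instead not fix $m$ in advance: take $\mathfrak p$ to be minimal in $V(\mathrm{Ann}_S m)$ for a cleverly chosen $m$, or better, argue directly. The cleanest route: suppose, for contradiction, that $\mathrm{Supp}_S M$ has no minimal element. Then one can build an infinite strictly descending chain $\mathfrak p_0 \supsetneq \mathfrak p_1 \supsetneq \cdots$ in $\mathrm{Supp}_S M$; but any strictly descending chain of primes in a noetherian ring is finite (the Krull dimension is finite locally, or directly: a descending chain of primes in $S_{\mathfrak p_0}$ must terminate since $S_{\mathfrak p_0}$ is noetherian). This contradiction proves the claim.

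Actually the slickest argument avoids chains entirely: pick any $\mathfrak q_0 \in \mathrm{Supp}_S M$, which exists since $M \neq 0$. Localizing, $M_{\mathfrak q_0} \neq 0$, so pick a nonzero homogeneous $m/s \in M_{\mathfrak q_0}$, equivalently a homogeneous $m \in M$ with $m/1 \neq 0$ in $M_{\mathfrak q_0}$, i.e.\ $\mathrm{Ann}_S(m) \subseteq \mathfrak q_0$. Now $V(\mathrm{Ann}_S m)$ is a nonempty closed subset of $\Spec S$ contained in $\mathrm{Supp}_S M$ and contains $\mathfrak q_0$; being closed in a noetherian space it has a minimal element $\mathfrak p \subseteq \mathfrak q_0$. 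I claim $\mathfrak p$ is minimal in all of $\mathrm{Supp}_S M$: if $\mathfrak p' \in \mathrm{Supp}_S M$ with $\mathfrak p' \subseteq \mathfrak p$, then $m/1 \neq 0$ already in $M_{\mathfrak p}$ (since $\mathrm{Ann}_S m \subseteq \mathfrak p$), hence $m/1 \neq 0$ in the further localization... no — localization can kill elements going down. So this still needs care. The honest fix: replace $M$ by $Sm$ from the start and note $\mathrm{Supp}_S(Sm) = V(\mathrm{Ann}_S m)$ is closed, has a minimal element $\mathfrak p$, and $\mathfrak p \in \mathrm{Supp}_S M$; then to see $\mathfrak p$ is minimal in $\mathrm{Supp}_S M$, suppose $\mathfrak p' \subsetneq \mathfrak p$ lies in $\mathrm{Supp}_S M$ and derive a contradiction via the finiteness of descending prime chains in the noetherian local ring $S_\mathfrak p$ applied to the nonempty set $\{\mathfrak r \in \mathrm{Supp}_S M : \mathfrak r \subseteq \mathfrak p\}$, which has a minimal element by DCC on primes in $S_\mathfrak p$; that minimal element is then minimal in $\mathrm{Supp}_S M$ too, because anything below it in $\mathrm{Supp}_S M$ is below $\mathfrak p$.

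The main obstacle, as the above indicates, is purely bookkeeping: $\mathrm{Supp}_S M$ for infinitely generated $M$ need not be closed (it is specialization closed but only the \emph{small} support controls it), so one cannot simply invoke "a nonempty closed set in a noetherian space has a minimal point." The resolution is to first cut down to a cyclic, hence finitely generated, submodule whose support \emph{is} closed, and then transfer minimality using that $\Spec S$ (equivalently each localization $S_\mathfrak p$) satisfies the descending chain condition on prime ideals. I would present the argument in the order: (1) reduce to $Sm$ for a homogeneous $m$, observing $\mathrm{Supp}_S(Sm) = V(\mathrm{Ann}_S m) \subseteq \mathrm{Supp}_S M$ is nonempty and closed; (2) extract a minimal prime $\mathfrak p$ of this closed set; (3) confirm $\mathfrak p$ is minimal in $\mathrm{Supp}_S M$ by noting any prime of $\mathrm{Supp}_S M$ contained in $\mathfrak p$ would, by noetherianity of $S_\mathfrak p$, allow one to find a minimal such prime, which would then be minimal in $\mathrm{Supp}_S M$ — and relabel if necessary. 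Everything here is standard graded commutative algebra; the only subtlety worth flagging explicitly in the write-up is the failure of $\mathrm{Supp}_S M$ to be closed in general, which is precisely why the reduction to $Sm$ is needed.
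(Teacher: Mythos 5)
Your argument ultimately lands on essentially the paper's proof: given any $\mathfrak p$ with $M_\mathfrak p \neq 0$ (which exists simply because $M\neq 0$), the set of primes of $\mathrm{Supp}_S M$ contained in $\mathfrak p$ is exactly $\mathrm{Supp}_{S_\mathfrak p} M_\mathfrak p$, and descending chains of primes in the local noetherian graded ring $S_\mathfrak p$ stabilize because $S_\mathfrak p$ has finite Krull dimension; so Zorn's lemma gives a minimal element, which is then automatically minimal in all of $\mathrm{Supp}_S M$. Your ``suppose no minimal element, build an infinite descending chain'' variant is the same argument by contradiction. Two comments. First, the cyclic-submodule reduction in your final plan (steps 1 and 2) is superfluous: once you run step 3, any $\mathfrak p$ in the support will do, and $V(\mathrm{Ann}_S m)$ and its minimal primes play no role --- you should drop them. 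Second, be careful not to slide from ``$S_{\mathfrak p_0}$ is noetherian'' to ``descending chains of primes terminate'': noetherianity is an ACC, and the DCC on primes comes from Krull's principal ideal theorem giving finite local Krull dimension. Since $S$ is $\mathbb Z$-graded, this needs the graded analogue, which is not entirely formal; the paper cites Bruns--Herzog, Theorem~1.5.8, or Park--Park, Theorem~3.5, and your write-up should do the same. Prune the several false starts and the redundant reduction, and you have the right proof.
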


\begin{proof}
If $M\neq 0$ then $M_\mathfrak p\neq 0$ for some prime~$\mathfrak p$, so the support is not empty. 
Moreover, it suffices to prove the claim for the nonzero module $M_\mathfrak p$ over~$S_\mathfrak p$, because a minimal prime of $\mathrm{Supp}_{S_\mathfrak p} M_\mathfrak p$ yields a minimal prime in $\mathrm{Supp}_S M$; hence we may assume that $S$ is local.
 By Zorn's lemma it suffices to show that in $\mathrm{Supp}_S M$ every chain of primes admits a minimum. Indeed, each such chain must stabilize, because a local commutative noetherian ring has finite Krull dimension. In the ungraded case, the latter is a well-known corollary of Krull's principal ideal theorem. A proof of the analogous result for graded rings can be found in~\cite[Theorem~1.5.8]{BurnsHerzog93} or~\cite[Theorem~3.5]{ParkPark11}.
\end{proof}

\begin{prop} \label{prop:detection}
If an object $X\in \mathcal T$ is such that $X\otimes \residue{p}= 0$ for all $\mathfrak p\in \Spec R$ then $X= 0$. 
\end{prop}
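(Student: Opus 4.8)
The plan is to detect vanishing of an object $X \in \mathcal T$ by testing against the residue field objects, using the hypothesis $X \otimes \residue{p} = 0$ for every prime together with the fact that $\mathcal T$ is generated by its unit (so that $\mathcal T = \Loc(\unit)$, equivalently: an object is zero as soon as its cohomology $H^*(-)$ vanishes, since $\Loc(\unit)^\perp = 0$). Concretely, I will show that the hypothesis forces $H^*X = 0$, and then conclude $X = 0$ from affineness.

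First I would reduce to a statement about the graded $R$-module $H^*X$. Suppose for contradiction that $X \neq 0$; then $H^*X \neq 0$. By Lemma~\ref{lemma:min_supp} (applied with $S = R$, $M = H^*X$), the big support $\SuppR H^*X$ is nonempty and contains a minimal element~$\mathfrak p$. After replacing $X$ by $X_\mathfrak p$, using the identification $(H^*X)_\mathfrak p \stackrel{\sim}{\to} H^*(X_\mathfrak p)$ recalled in \S\ref{sec:localizing} and the fact that $X_\mathfrak p \otimes \residue{q} = X \otimes \residue{q}$ vanishes for all $\mathfrak q$ (tensoring with $\residue{q}$ factors through the $\mathfrak p$-localization, or one checks it directly via $X_\mathfrak p = X \otimes \unit_\mathfrak p$), we may assume that $R$ is local with maximal ideal $\mathfrak m = \mathfrak p R_\mathfrak p$, that $\mathfrak p$ is the unique minimal prime of $\SuppR H^*X$ — so that $H^*X$ is supported only at $\mathfrak m$ — and that $X \otimes \residue{p} = 0$. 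The second key step is then to run the Koszul recursion against the chosen regular sequence $f_1,\dots,f_n$ generating $\mathfrak m$, applying $H^*$ to the triangles \eqref{eq:triangle_Xi} for $X \koszul (f_1,\dots,f_i)$ exactly as in the proof of Lemma~\ref{lemma:ttNAK}: since each $f_i \in \mathfrak m$ acts on the finitely-supported-at-$\mathfrak m$ module $H^*(X \koszul(f_1,\dots,f_{i-1}))$ and, by minimality of~$\mathfrak p$, that module is already $\mathfrak m$-local (all its associated primes equal $\mathfrak m$), multiplication by $f_i$ is locally nilpotent but in any case non-surjective when the module is nonzero — so by the long exact sequence $H^*X \neq 0$ propagates to $H^*(X \koszul(f_1,\dots,f_n)) = H^*(X \otimes \residue{p}) \neq 0$ up to a shift. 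This contradicts $X \otimes \residue{p} = 0$.

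The only subtlety — and the step I expect to be the main obstacle — is justifying that the Nakayama-type argument of Lemma~\ref{lemma:ttNAK} still forces non-vanishing to propagate when the cohomology modules need not be finitely generated (as they may be for arbitrary $X \in \mathcal T$, unlike in the compact case). The resolution is the reduction in the previous paragraph: after localizing at a \emph{minimal} prime of the support, every nonzero graded subquotient of $H^*X$ is $\mathfrak m$-torsion (supported only at the unique prime $\mathfrak m$ of the local ring $R_\mathfrak p$), so for a nonzero such module $N$ one has $N/\mathfrak m N \neq 0$ — because a nonzero $\mathfrak m$-torsion module over a noetherian local ring has a nonzero socle, hence $\mathfrak m N \neq N$ — which is precisely what is needed to conclude that multiplication by $f_i$ is not surjective and therefore $H^*(X\koszul(f_1,\dots,f_i)) \neq 0$. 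One then walks up the recursion $i = 1,\dots,n$ as in Lemma~\ref{lemma:ttNAK} to reach $H^*(X \otimes \residue{p}) \neq 0$, the desired contradiction. Hence $H^*X = 0$, and since $\mathcal T = \Loc(\unit)$ this gives $X = 0$.
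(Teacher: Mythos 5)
Your overall plan is exactly the route the paper takes: pick a minimal prime $\mathfrak p$ of the big support of $H^*X$ using Lemma~\ref{lemma:min_supp}, pass to $X_\mathfrak p$ so that its cohomology is supported only at $\mathfrak p$, and run the Koszul recursion along the chosen regular sequence. There is, however, a genuine gap in the commutative-algebra claim you use to drive the recursion. You assert that a nonzero $\mathfrak m$-torsion module $N$ over a noetherian local ring $(R_\mathfrak p,\mathfrak m)$ satisfies $\mathfrak m N \neq N$ ``because it has a nonzero socle.'' This implication is false: the injective hull $E(R_\mathfrak p/\mathfrak m)$ over any noetherian local ring of positive Krull dimension is nonzero, $\mathfrak m$-torsion, has nonzero socle, and nonetheless satisfies $\mathfrak m E(R_\mathfrak p/\mathfrak m)=E(R_\mathfrak p/\mathfrak m)$; for the DVR $k[[t]]$ this is $E(k)\cong k((t))/k[[t]]$, on which multiplication by $t$ is surjective. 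Such modules can genuinely occur here, since $X\in\mathcal T$ is arbitrary and $H^*X_\mathfrak p$ need not be finitely generated. So multiplication by $f_i\in\mathfrak m$ may well be surjective on $H^*X_{i-1}$, and your non-surjectivity argument does not yield $H^*X_i\neq 0$.

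The socle observation is in fact the right ingredient, but it gives non-\emph{injectivity} rather than non-surjectivity: every $f_i\in\mathfrak m$ annihilates the (nonzero) socle of a nonzero $\mathfrak m$-torsion module, so multiplication by $f_i$ has nonzero kernel. Reading the long exact sequence for the triangle \eqref{eq:triangle_Xi} on the other side then gives $H^*X_i\neq 0$. Equivalently — and this is how the paper argues — if $X_i=0$ then the triangle would force $f_i$ to act invertibly on $H^*X_{i-1}$; but an element $f_i\in\mathfrak m$ cannot act invertibly on a nonzero $\mathfrak m$-torsion module, since $f_i^k m=0$ for some $k$, while invertibility would then give $m=0$, contradicting that $\mathfrak p$ lies in the support. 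Replacing your non-surjectivity claim by non-injectivity, or by this non-invertibility argument, repairs the step; the rest of your proof then coincides with the paper's.
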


\begin{proof}
We prove the contrapositive. Assume that $X\neq 0$. Then $H^*X\neq 0$, hence  for some~$\mathfrak{p}\in \Spec R$ we must have $H^*(X_\mathfrak p)=(H^*X)_\mathfrak p\neq0$ and therefore $X_\mathfrak p\neq 0$. 
By Lemma~\ref{lemma:min_supp}, we may choose a prime $\mathfrak p$ which is minimal among the primes with this property. 
Thus the big support of the $R$-module $H^*X_\mathfrak p$ consists precisely of the prime~$\mathfrak p$.
We are going to recursively show that 
$X_i:=X_\mathfrak p\koszul (f_1,\ldots,f_i)$  
satisfies $\Supp_R H^* X_i = \{\mathfrak p\}$ for all $i\in\{1,\ldots,n\}$.
Thus in particular $X\otimes \residue{p}  = X_n \neq 0$, which proves the proposition. 
We already know that $\Supp_R H^* X_0 = \{\mathfrak p\}$ for $X_0:=X_\mathfrak p$, and suppose we have  shown that 
$\Supp_R H^* X_{i-1} = \{\mathfrak p\}$.
The exact triangle 
\[
\xymatrix{
\Sigma^{-|f_i|}X_{i-1} \ar[r]^-{f_i} & X_{i-1} \ar[r] & X_i \ar[r] & \Sigma^{-|f_i|+1} X_{i-1} 
}
\]
implies that $\Supp_R H^* X_i \subseteq \{\mathfrak p\}$. Hence $X_i\neq 0$ is equivalent to $\Supp_R H^* X_i = \{\mathfrak p\}$. 
By the triangle again, if $X_i= 0$ were the case $f_i$ would act invertibly on $X_{i-1}$ and thus on $H^*X_{i-1}$.
This implies $H^* X_{i-1} = (H^* X_{i-1})[f_i^{-1}] $, and since $f_i \in \mathfrak p$ we would conclude that $\mathfrak p\not\in \Supp_R H^* X_{i-1}$, in contradiction with the induction hypothesis. Therefore $X_i\neq 0$, as claimed.
\end{proof}

\subsection*{Proof of Theorem~\ref{thm:main_localizing}}
The result now follows easily from the machinery developed by Benson, Iyengar and Krause in \cite{BensonIyengarKrause08} and \cite{BensonIyengarKrause11}. 
Indeed, by \cite[Theorem~4.2]{BensonIyengarKrause11} in order to obtain the claimed classification of localizing subcategories it suffices to verify that \emph{the action of $R$ stratifies $\mathcal T$}. By definition, this means that the following two axioms are satisfied: 
\begin{itemize}
\item \emph{The local-global principle:} For every object $X\in \mathcal T$ we have the equality
\[
\Loc(X) = \Loc(\{\Coloc{\mathfrak p}X \mid \mathfrak p \in \Spec R \})
\]
of localizing subcategories of~$\mathcal T$.
\item \emph{Minimality:} For every $\mathfrak p\in \Spec R$ the localizing subcategory $\Coloc{\mathfrak p}\mathcal T$ of $\mathcal T$ is minimal or zero.
\end{itemize}
The functors $\Coloc{\mathfrak p}\colon \mathcal T\to \mathcal T$ are introduced in \cite{BensonIyengarKrause08}, but we don't need to know how they are defined.
In our context, i.e.\ where $\mathcal T$ is a tensor category and the action of $R$ is the canonical one of the central ring, the local-global principle always holds by \cite[Theorem~7.2]{BensonIyengarKrause11} (see also \cite[Theorem~6.8]{Stevenson13}).
Moreover $\Coloc{\mathfrak p} X = X\otimes \Coloc{\mathfrak p} \unit$ for all $X\in \mathcal T$, which implies that $\Coloc{\mathfrak p}\mathcal T = \Loc(\Coloc{\mathfrak p}\unit)$ since $\mathcal T$ is generated by~$\unit$.
Therefore the remaining minimality condition follows from Proposition~\ref{prop:minimal_T}, because $\Loc(\residue{p}) = \Loc(\Coloc{\mathfrak p}\unit)$ by \cite[Lemma 3.8~(2)]{BensonIyengarKrause11} (indeed, by construction $\residue{p}$ is a particular instance of the objects collectively denoted by $\unit(\mathfrak p)$ in \emph{loc.\,cit.}).

This establishes the first bijection in Theorem~\ref{thm:main_localizing}.

The claimed identification of the Benson-Iyengar-Krause support,
 $\supp_R X = \{ \mathfrak p\in \Spec R\mid X \otimes \Coloc{\mathfrak p} \unit \neq 0\}$, 
 with the set $\{\mathfrak p\in \Spec R\mid X\otimes \residue{p}\neq0\}$ is an easy consequence of the equality $\Loc(\residue{p}) = \Loc(\Coloc{\mathfrak p}\unit)$ mentioned above.
 
 It remains to verify the moreover part of Theorem~\ref{thm:main_localizing}.
Let us begin by noting that, if $X\in \mathcal K$ is a compact object, we have
\begin{equation} \label{eq:supps_cpt}
\suppR X= \suppR H^*X =\SuppR H^*X
\end{equation}
by \cite[Theorem 5.5\,(1)]{BensonIyengarKrause08} and Lemma~\ref{lemma:supports}.

Now let $\mathcal L\subseteq \mathcal T$ be such that $\mathcal L= \Loc(\mathcal L \cap \mathcal K)$. 
Then 
\[
\bigcup_{X\in \mathcal L} \suppR X = \bigcup_{X\in \mathcal L\cap \mathcal K} \suppR X = \bigcup_{X\in \mathcal L\cap \mathcal K} \SuppR H^*X
\] 
by \eqref{eq:supps_cpt}, and the latter is a specialization closed subset of the spectrum. 
Conversely, if $S \subseteq \Spec R$ is specialization closed the corresponding localizing subcategory $\{X\in \mathcal T\mid \suppR X\subseteq S\}$ is generated by compact objects by \cite[Theorem~6.4]{BensonIyengarKrause08}, hence $\mathcal L=\Loc(\mathcal L\cap \mathcal K)$.
This concludes the proof of the theorem.

It is well-known that the assignments $\mathcal C\mapsto \Loc(\mathcal C)$ and $\mathcal L \mapsto \mathcal L\cap \mathcal K$ are mutually inverse bijections between thick subcategories $\mathcal C\subseteq \mathcal K$ and localizing subcategories $\mathcal L\subseteq \mathcal T$ which are generated by compact objects of~$\mathcal T$ (see~\cite{Neeman92b}). Together with \eqref{eq:supps_cpt}, this shows how to deduce the classification of thick subcategories of Corollary~\ref{cor:main_thick} from Theorem~\ref{thm:main_localizing}. 

Finally, there are several ways to derive the telescope conjecture of Corollary~\ref{cor:telescope} from the previous results. For instance, we may proceed as in \cite[\S6.2]{BensonIyengarKrause11}.

\begin{remark}
Using the theory of coherent functors, Benson, Iyengar and Krause have recently developed in  \cite{BensonIyengarKrause15} an analogue of their stratification theory of compactly generated categories that can be applied to general essentially small triangulated categories. Their theory, and more specifically \cite[Theorem~7.4]{BensonIyengarKrause15}, provides an alternative way to derive Theorem~\ref{thm:main_spectrum} from Proposition~\ref{prop:minimal}.
\end{remark}

\subsection*{The case of commutative dg~algebras}

We still owe our reader a proof of Theorem~\ref{thm:cdga}. Let $A$ be a commutative dg~algebra and let $D(A)$ be the derived category of (left, say) dg-$A$-modules. The following elementary fact was pointed out to us by the referee.

\begin{lemma} \label{lemma:cdga}
Every $f\in H^*A$ acts as zero on its own mapping cone~$C(f)$.
\end{lemma}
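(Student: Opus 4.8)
The statement is about a strictly commutative dg algebra $A$ and a homogeneous cohomology class $f \in H^*A$, and we want to see that the induced map $f \cdot C(f)\colon \Sigma^{-|f|}C(f) \to C(f)$ is zero in $D(A)$, where $C(f) = \cone(f\cdot A)$ is the mapping cone of multiplication by $f$ on the unit $A$. The plan is to exploit that in $D(A)$ the cone $C(f)$ is itself a dg algebra (indeed a Koszul-type dg algebra), so that it carries an explicit small model on which the multiplication-by-$f$ map becomes literally null-homotopic rather than merely zero in cohomology.

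First I would pick a cycle representative $a\in A$ of $f$ (so $da=0$ and $|a|=|f|$), and realize $C(f)$ concretely as the dg $A$-module $A\langle e\rangle$ obtained by freely adjoining a single generator $e$ with $|e| = |f|-1$ and differential $de = a$; this is the standard "Koszul complex on one element" and is a genuine (strictly commutative, since $|e|$ parity is handled by the sign rules) dg $A$-algebra, quasi-isomorphic to $C(f)$ in $D(A)$. Inside this model, multiplication by $a$ on $A\langle e\rangle$ is an honest dg-module endomorphism, and the element $e$ provides a contracting homotopy: one checks $d(e\cdot x) + e\cdot d(x) = (de)\cdot x = a\cdot x$ for every $x$, using $de = a$ and $d(e\cdot x) = (de)x - e\,dx$ (the sign coming from $|e|$ being odd, which is exactly why $e^2 = 0$ and the whole thing is consistent). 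Hence the chain map $a\cdot(-)$ on $A\langle e\rangle$ is null-homotopic, so $f\cdot C(f) = 0$ already at the level of maps in $D(A)$, not just on cohomology.

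The one point requiring care — and what I expect to be the only real subtlety — is the bookkeeping of signs and parities: when $|f|$ is even, $|e|$ is odd, and one must make sure the Koszul sign convention makes $A\langle e \rangle$ a bona fide commutative dg $A$-algebra (so that Example~\ref{ex:cdga} and Theorem~\ref{thm:cdga} genuinely apply to it, not merely to $A$) and that the homotopy formula $d(ex) + e\,dx = ax$ holds with the correct sign. Once that is pinned down, the argument is a two-line homotopy computation. An alternative, essentially equivalent, route avoids even writing the model: one observes that $C(f)$ is a ring object in $D(A)$ with unit map $A \to C(f)$, that $f$ maps to $0$ in $H^*C(f)$ (this is immediate from the defining triangle, as in the proof of Lemma~\ref{lemma:zero_action}), and that $f\cdot C(f)$ equals the composite $\Sigma^{-|f|}C(f) \xrightarrow{\sim} \Sigma^{-|f|}C(f)\otimes_A A \to C(f)\otimes_A C(f) \xrightarrow{\text{mult}} C(f)$ factoring through $f$ acting via the second factor; since $f = 0$ in $\pi_* \End_{D(A)}(C(f))$ — because $H^*C(f)$ is a module over the ring $H^*C(f)$ in which the image of $f$ vanishes — the composite is zero. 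I would present the homotopy version as the main proof since it is the most transparent and self-contained, and mention the ring-object argument as a remark.
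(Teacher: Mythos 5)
Your main argument is essentially the paper's own proof, written in slightly different notation: the paper exhibits the null-homotopy $H(\s^n a, \s^{2n+1}b)=(0,\s^{n+1}a)$ and verifies $dH+Hd=-g$ and $H(tx)=(-1)^{|t|}tH(x)$ directly, whereas you describe the same map as ``multiply by the Koszul generator $e$''. Both are the identical homotopy on the same two-term model of $C(f)$, and you correctly flag the sign/parity bookkeeping as the only delicate point. That part is fine.

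One inaccuracy worth fixing: you assert that $A\langle e\rangle$ (the two-term complex with $de=a$, $e^2=0$) is a strictly commutative dg $A$-algebra for either parity of $|e|$. That is true only when $|e|$ is odd. When $|e|$ is even (i.e.\ $|f|$ odd, which is exactly the new case Theorem~\ref{thm:cdga} is meant to cover), one has $d(e^2)=ae+(-1)^{|e|}ea=2ae$, so the differential on $A[e]$ does \emph{not} descend to the quotient $A[e]/(e^2)$ unless $2a=0$; the two-term Koszul object is a dg $A$-module but in general not a dg $A$-algebra. This does not hurt your main proof --- the homotopy $H(x+ey)=ex$ is just the $A$-linear map $(x,y)\mapsto(0,x)$ and needs only the module structure, and your Leibniz computation goes through with the correct sign --- but it does undercut your proposed ``ring object'' alternative, which genuinely uses a multiplication $C(f)\otimes_A C(f)\to C(f)$ and hence would need a separate justification (or the evenness hypothesis) for odd $|f|$. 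I would drop or explicitly restrict that remark, and phrase the main proof purely at the level of dg modules.
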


\begin{proof}
A (homogeneous) element $f\in H^*A$ of degree $|f|=-n$ is (represented by) a morphism $f\colon \Sigma^n A \to A$ of left dg-$A$-modules. Let us write $\s a$  ($a\in A$) for a generic element of degree $|a|-1$ in the suspension $\s A := \Sigma A$; here we use the Koszul sign convention and treat $\s$ as a symbol of degree~$-1$. The cone $C(f)$ has elements $(a,\s^{n+1} b)$ (for $a,b\in A$). Then $f$ acts on $C(f)$ by a morphism $\s^n C(f)\to C(f)$ which, under the isomorphism $\s^n C(f) \cong C(\s^n f)$, is written as follows: 
\[ g\colon C(\s^n f)\to C(f) \quad \quad g(\s^na , \s^{2n+1} b) = (f(\s^na), \s^{n+1} f(\s^nb))\] 
(recall that the suspension $\s h\colon \s B\to \s C$ of a morphism $h\colon B\to C$ is given by $(\s h)(\s b)= \s (h(b))$).
With these notations, the map $H\colon C(\s^nf)\to C(f)$ defined by $H(\s^na, \s^{2n+1}b):=(0,\s^{n+1}a)$ is easily seen to satisfy $H(t x)= (-1)^{|t|} t H(x)$ (for $t\in A, x\in C(\s^nf)$) and $dH + Hd = -g$; in other words, $H$ is a homotopy $g\sim 0$ defined over~$A$.
\end{proof}

As noted in Remark~\ref{rem:hyp}, Lemma~\ref{lemma:zero_action} was the only place in all of our arguments where we made use of the hypothesis that $R$ is concentrated in even degrees and that in the regular sequences we may choose the elements to be non-zero-divisors. 
But if we consider the example $\mathcal K:=D(A)^c$, $\mathcal T:=D(A)$ and $R:=H^*A$, we see immediately that the conclusion of the lemma also follows from the above Lemma~\ref{lemma:cdga}. Hence in this case we can get rid of the extra hypotheses, while the rest of our arguments go through unchanged. This proves Theorem~\ref{thm:cdga}.

Indeed, in general in Theorem~\ref{thm:main_spectrum}~(b) we could similarly renounce the evenness of $R$ if we substitute the requirement that all elements $f_i$ of the regular sequences be non-zero-divisors with the requirement that $f_i\cdot \unit \koszul f_i=0$.

\subsection*{Acknowledgements}

We are very grateful to Paul Balmer and an anonymous referee for their useful comments, and to the referee for suggesting Theorem~\ref{thm:cdga}. We also thank Joseph Chuang for the reference~\cite{Mathew14}. 

\bibliographystyle{alpha}%
\bibliography{ttg_articles}

\end{document}